\numberwithin{equation}{section}
\newtheorem{theorem}{Theorem}[section]
\newtheorem{definition}{Definiton}[section]
\newtheorem{main theorem}{Main Theorem}
\newtheorem{lemma}{Lemma}[section]
\newtheorem{proposition}{Proposition}[section]
\theoremstyle{remark}
\newtheorem{remark}{Remark}[section]
\begin{document}

\title[Tchebychev hyperovaloid is ellipsoid]
{Every centroaffine Tchebychev hyperovaloid is ellipsoid}

\author[X. Cheng, Z. Hu and L. Vrancken]
{Xiuxiu Cheng, Zejun Hu and Luc Vrancken}

\thanks{2010 {\it Mathematics Subject Classification.} \ Primary 53A15; Secondary
53C23, 53C24.}

\thanks {The first author was supported by CPSF, Grant No. 2019M652554.
The second author was supported by NSF of China, Grant No. 11771404.}

\keywords{Centroaffine hypersurface, Tchebychev hypersurface, shape operator,
difference tensor, hyperovaloid, ellipsoid.}

\begin{abstract}
In this paper, we study locally strongly convex Tchebychev hypersurfaces,
namely the {\it centroaffine totally umbilical hypersurfaces}, in the
$(n+1)$-dimensional affine space $\mathbb{R}^{n+1}$. We first make an
ordinary-looking observation that such hypersurfaces are characterized by having
a Riemannian structure admitting a canonically defined closed conformal vector
field. Then, by taking the advantage of properties about Riemannian manifolds
with closed conformal vector fields, we show that the ellipsoids are the only
centroaffine Tchebychev hyperovaloids. This solves the longstanding problem of
trying to generalize the classical theorem of Blaschke and Deicke on affine
hyperspheres in equiaffine differential geometry to that in centroaffine
differential geometry.
\end{abstract}

\maketitle

\section{Introduction}\label{sect:1}

In this paper, we study locally strongly convex centroaffine hypersurfaces, i.e., the
hypersurfaces of the $(n+1)$-dimensional affine space $\mathbb{R}^{n+1}$ with centroaffine
normalization. It is well-known that in both Euclidean and equiaffine differential geometry,
the Weingarten (shape) operator contains essential geometric information about a hypersurface.
This is different in centroaffine differential geometry, where one studies the properties
of hypersurfaces in $\mathbb{R}^{n+1}$ which are invariant under the centroaffine
transformation group $G=GL(n+1,\mathbb{R})$, where $G$ keeps the origin of $\mathbb{R}^{n+1}$
fixed. Since the centroaffine normalization induces the identity as Weingarten operator,
from the point of view of relative differential geometry any nondegenerate hypersurface
with centroaffine normalization is a relative hypersphere (see sections 6.3 and 7.2 of
\cite{SSV}); thus in centroaffine differential geometry {\it the usually induced Weingarten
operator} contains no further geometric information.

In such situation, C.P. Wang \cite{W} made a breakthrough by giving the reasonable
definition for the Weingarten (shape) operator on centroaffine hypersurfaces of the
$(n+1)$-dimensional affine space $\mathbb{R}^{n+1}$. Specifically, on a centroaffine
hypersurface, besides the centroaffine metric, there exists a canonically defined
{\it Tchebychev vector field} $T$. Let $\hat\nabla$ denote the Levi-Civita connection with
respect to the centroaffine metric, then the operator $\mathcal{T}:=\hat\nabla T$
was introduced to be defined as {\it the centroaffine shape operator} by Wang \cite{W}.
(Note: eversince \cite{W} the centroaffine shape operator $\mathcal{T}$ is also called the {\it
Tchebychev operator}). To justify this terminology, it was shown that the Tchebychev operator
$\mathcal{T}$ in centroaffine differential geometry is analogous to the shape operator in the
equiaffine differential geometry. Indeed, C.P. Wang \cite{W} calculated the first variation
formula of the centroaffine area functional and, as an important result, he showed (cf. Theorem
2 of \cite{W}) that the critical hypersurfaces of this functional are exactly hypersurfaces with
vanishing {\it centroaffine mean curvature} $H:=\frac1n{\rm Tr}\,\mathcal{T}$; moreover, as in
Euclidean and equiaffine differential geometry, C.P. Wang also proved (Theorem 1 in \cite{W})
that the only hyperovaloid in $\mathbb{R}^{n+1}$ with constant centroaffine mean curvature is
the ellipsoid centered at the origin of $\mathbb{R}^{n+1}$. It is worthy to note that, as there
are no general results about the sign of the second variation of the centroaffine area functional
at the critical hypersurfaces, it was suggested in \cite{LLS} (see also \cite{V}) to call a
centroaffine hypersurface with $H=0$ the {\it centroaffine extremal hypersurface}.

The centroaffine shape operator $\mathcal{T}$ was studied systematically from Liu and Wang
\cite{LW1}. In particular, there is an
important subclass of centroaffine hypersurfaces, namely the {\it centroaffine
totally umbilical hypersurfaces}. By definition, it consists of centroaffine
hypersurfaces whose shape operator $\mathcal{T}$ is proportional to the identity
isomorphism of the tangent spaces. Following Liu and Wang \cite{LW1}, these
centroaffine hypersurfaces are usually referred to as {\it Tchebychev hypersurfaces}.
Obviously, the notion of Tchebychev hypersurfaces generalizes in a natural way the
notion of {\it affine hyperspheres} in equiaffine differential geometry. More to be
pointed out is that both, i.e., Tchebychev hypersurfaces in centroaffine differential
geometry and affine hyperspheres in equiaffine differential geometry, have exactly
the similar structure equations (cf. \cite{HLV,LSZH,LW1,NS,SSV}). Because of such
nice similarity, the Tchebychev hypersurfaces have been under extensive study. For
references, we refer to \cite{B,CHY,LLSSW,L,LSW,LW1,LW2,STV}.

In equiaffine differential geometry, we have the well-known classical theorem of
Blaschke and Deicke (cf. Theorem 3.35 in \cite{LSZH}) which states that
if a {\it hyperovaloid} (which means a connected compact locally strongly
convex hypersurface without boundary in the $(n+1)$-dimensional affine
space $\mathbb{R}^{n+1}$) is an affine hypersphere, then it is an ellipsoid.
The Blaschke and Deicke's theorem and the preceding mentioned similarity between
affine hyperspheres and Tchebychev hypersurfaces motivate strongly to study the
following problem, which will provide an interesting new global characterization
of the ellipsoid as centroaffine Tchebychev hyperovaloid.

\vskip 2mm

\noindent {\bf PROBLEM} (\cite{CHY}). {\it  Let $x: M^n\rightarrow\mathbb{R}^{n+1}$
$(n\ge2)$ be a centroaffine Tchebychev hyperovaloid. Must $x(M^n)$ be an ellipsoid containing
the origin of $\mathbb{R}^{n+1}$}?

\vskip 2mm

The PROBLEM has been considered, first by Liu and Wang \cite{LW1} but restricts to
the case $n=2$. It was solved affirmatively:

\begin{theorem}[cf. Theorem 4.3 of \cite{LW1}]\label{thm:1.1}
Let $x:M^2\rightarrow\mathbb{R}^{3}$ be a centroaffine Tchebychev ovaloid. Then $x(M^2)$ is
an ellipsoid in $\mathbb{R}^{3}$ such that the origin of $\mathbb{R}^{3}$ is in the
inside of $x(M^2)$.
\end{theorem}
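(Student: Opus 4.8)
The plan is to show that the centroaffine mean curvature $H$ is constant and then to invoke C.P. Wang's theorem (Theorem~1 of \cite{W}), according to which a hyperovaloid of constant $H$ must be an ellipsoid centered at the origin; this simultaneously yields that the origin lies inside $x(M^2)$. Since $\mathcal{T}=\lambda\,\mathrm{id}$ gives $H=\tfrac1n\,\mathrm{Tr}\,\mathcal{T}=\lambda$, everything reduces to proving that the Tchebychev function $\lambda$ is constant. Write $h$ for the centroaffine metric, $\hat\nabla$ for its Levi-Civita connection, $K:=\nabla-\hat\nabla$ for the difference tensor, and $\hat C(X,Y,Z):=h(K(X,Y),Z)$ for the (totally symmetric) Fubini--Pick form. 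I would decompose $\hat C=\hat C^{0}+\hat C^{T}$, where $\hat C^{T}(X,Y,Z)=\tfrac12\big(h(X,Y)h(T,Z)+h(Y,Z)h(T,X)+h(Z,X)h(T,Y)\big)$ is the pure-trace part fixed by $T$ and $\hat C^{0}$ is trace-free. The Tchebychev condition also reads $\hat\nabla_X T=\lambda X$, so $T$ is a closed conformal vector field.

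Because $x(M^2)$ is an ovaloid it is diffeomorphic to $S^2$, and I would fix global isothermal coordinates $z=u+iv$ for $h$, so that $h=e^{2\psi}|dz|^{2}$. In these coordinates the trace-free cubic form is the real part of a cubic differential, $\hat C^{0}=\mathrm{Re}\big(\Phi\,dz^{3}\big)$, and the local expressions $\Phi\,dz^{3}$ patch together into a globally defined cubic differential on $S^2$. The central step is to prove that this differential is holomorphic. For this I would use the Codazzi-type integrability equation of the centroaffine structure, which expresses the failure of $\hat\nabla\hat C$ to be totally symmetric in terms of the shape operator $\mathcal{T}$ and the metric. Here the umbilicity $\mathcal{T}=\lambda\,\mathrm{id}$ is decisive: each correction term becomes pure trace, so it only modifies $\hat C^{T}$ and leaves $\hat\nabla\hat C^{0}$ totally symmetric. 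Total symmetry of $\hat\nabla\hat C^{0}$ is precisely the Cauchy--Riemann equation $\partial_{\bar z}\Phi=0$. I expect this cancellation---checking that the totally umbilical condition annihilates exactly the obstruction to holomorphicity---to be the main obstacle, since it requires writing out the Codazzi equation together with the trace decomposition explicitly.

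A holomorphic cubic differential on $S^2$ vanishes identically, since it would otherwise be a nonzero holomorphic section of the third power of the canonical bundle, whose degree $-6$ is negative (equivalently, its zeros cannot sum to $-6$). Hence $\Phi\equiv0$ and $\hat C^{0}\equiv0$, so $K$ is completely determined by $T$. Substituting $\hat C^{0}=0$ into the Gauss equation then leaves a relation among the Gauss curvature $\hat\kappa$ of $h$, the function $\lambda$ and $|T|^{2}$, while differentiating $\hat\nabla_X T=\lambda X$ and comparing with $\hat R(X,Y)T$ yields $\hat\nabla\lambda=-\hat\kappa\,T$. Finally I would run a maximum-principle (or integral) argument on the compact surface using these relations to conclude that $\lambda$ is constant; $H=\lambda$ is then constant, and Wang's theorem identifies $x(M^2)$ as an ellipsoid with center the origin, so the origin lies in its interior, as claimed.
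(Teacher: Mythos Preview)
The paper does not supply its own proof of Theorem~\ref{thm:1.1}; it is quoted from \cite{LW1} as background for the higher-dimensional result. So there is no in-paper argument to compare against, and I can only assess the internal soundness of your proposal.

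Your overarching plan --- prove that $H=\lambda$ is constant and then invoke Wang's constant-$H$ theorem --- cannot succeed as stated. An ellipsoid whose centre is \emph{not} the origin is still a centroaffine Tchebychev ovaloid (every nondegenerate quadric is Tchebychev; see \cite{LSW} and the discussion in Section~\ref{sect:3}), yet its function $\lambda$ is genuinely non-constant. This is exactly the point of Remark~\ref{rem:1.1}: unlike the equiaffine shape operator on an affine sphere, the Tchebychev operator on an ellipsoid satisfies $\mathcal{T}=\lambda\,\mathrm{id}$ with $\lambda$ typically varying. So the target statement ``$\lambda$ is constant'' is false for the very surfaces the theorem is meant to characterise, and the maximum-principle step at the end would have to fail.

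The good news is that your middle step already proves the theorem without that detour. Your computation that the Tchebychev (totally umbilical) condition makes $\hat\nabla\hat C^{T}$ totally symmetric is correct; combined with the total symmetry of $\hat\nabla\hat C$ in \eqref{eqn:2.4}, this yields total symmetry of $\hat\nabla\hat C^{0}$, hence holomorphicity of the cubic differential $\Phi\,dz^{3}$, hence $\Phi\equiv 0$ on $S^{2}$. But $\hat C^{0}=0$ is precisely $\tilde K=0$ in \eqref{eqn:2.6} (with $n=2$ the coefficient $\tfrac{n}{n+2}=\tfrac12$ matches your $\hat C^{T}$), and the paper recalls that $\tilde K=0$ characterises hyperquadrics. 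A compact locally strongly convex quadric is an ellipsoid, and transversality of the position vector forces the origin to lie in its interior. So drop the last paragraph entirely and finish immediately after $\hat C^{0}=0$.
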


The PROBLEM has been further investigated by many researchers in higher
dimensional cases. In \cite{LSW}, Liu, Simon and Wang solved it affirmatively
under an additional {\it nondegenerate equiaffine Gauss map} condition (cf.
Theorem 5.2 of \cite{LSW}). In Theorem 5 of \cite{L}, following an argument
about affine hyperspheres, M. Li also obtained some partial results from the
point of view in relative affine differential geometry. More recently, joint
with Z.K. Yao the first two authors of the present article solved the PROBLEM
affirmatively under the additional condition {\it the centroaffine metric
having nonnegative sectional curvatures} (Theorem 1.7 of \cite{CHY}). We would
mention that the method of \cite{CHY} depends heavily on the recent classification
of locally strongly convex centroaffine hypersurfaces with parallel traceless
difference tensor (cf. \cite{CH,CHM,HLV}).

In this paper, as the continuation of \cite{CHY}, we still focus on the above PROBLEM.
By adopting a new approach, we first make an ordinary-looking but
important observation (Lemma \ref{lem:2.1}) that the Tchebychev vector field
on centroaffine hypersurfaces is closed, and that a centroaffine hypersurface
is a Tchebychev hypersurface if and only if its Tchebychev vector field is a
conformal vector field with respect to the centroaffine metric. Then, by taking
the advantage of typical properties about a Riemannian manifold with closed conformal
vector field, we eventually solve the PROBLEM affirmatively for every dimension
$n\ge3$. Our main result can be stated as follows:

\begin{theorem}\label{thm:1.2}
Let $x: M^n\rightarrow\mathbb{R}^{n+1}$ $(n\ge3)$ be a centroaffine Tchebychev hyperovaloid.
Then $x(M^n)$ is an ellipsoid such that the origin of $\mathbb{R}^{n+1}$ is in the
inside of $x(M^n)$.
\end{theorem}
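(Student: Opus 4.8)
The plan is to use Lemma~\ref{lem:2.1} to recast the hypothesis as a rigidity problem for a compact Riemannian manifold carrying a closed conformal vector field, and then to couple the resulting metric rigidity with the centroaffine structure equations. First I would record the analytic content of Lemma~\ref{lem:2.1}. Since $x(M^n)$ is a Tchebychev hypersurface, $\mathcal T=\hat\nabla T=H\,\mathrm{Id}$, that is, $\hat\nabla_X T=H\,X$ for every $X$. A hyperovaloid is compact and, being closed and locally strongly convex, is diffeomorphic to $S^n$ and hence simply connected for $n\ge2$; thus the closed $1$-form $\hat g(T,\cdot)$ is exact, $T=\hat\nabla f$ for a smooth potential $f$, and
\begin{equation}\label{eq:hess}
\mathrm{Hess}\,f=H\,\hat g .
\end{equation}
Tracing \eqref{eq:hess} gives $\hat\Delta f=nH$ for the Laplacian $\hat\Delta$ of $\hat g$, and integrating over the closed manifold $M^n$ yields $\int_{M}H\,dV_{\hat g}=0$. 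Consequently, if one can show that $H$ is constant, then $H\equiv0$, and Theorem~1 of \cite{W}, that the only hyperovaloid with constant centroaffine mean curvature is the ellipsoid, finishes the argument. The whole problem is therefore reduced to proving that the conformal factor $H$ is constant.

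Next I would invoke the structure theory of closed conformal vector fields. If $T\equiv0$ then $H\equiv0$ trivially, so assume $T\not\equiv0$. On a compact manifold \eqref{eq:hess} forces the critical set of $f$ to consist of the isolated zeros of $T$, and the classical results on concircular functions (of Tashiro--Obata--Kanai type) show that $(M^n,\hat g)$ is rotationally symmetric: away from its two poles it is a warped product $dr^2+\varphi(r)^2\,g_{S^{n-1}}$, the trajectories of $T$ are geodesics, each regular level set $\{f=\mathrm{const}\}$ is a totally umbilic geodesic sphere, and the invariants $H$, $|T|$ and $f$ depend on the radial parameter $r$ alone. I expect this step to be essentially standard.

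The heart of the matter is to feed this warped-product rigidity back into the centroaffine structure equations. Writing $K$ for the difference tensor and $\hat R$ for the curvature tensor of $\hat g$, equation \eqref{eq:hess} yields the curvature identity $\hat R(X,Y)T=dH(X)\,Y-dH(Y)\,X$, whereas the centroaffine Gauss equation expresses $\hat R$ through the commutators $[K_X,K_Y]$ together with a constant-curvature term. Comparing the two expressions along the geodesic flow of $T$ produces algebraic relations tying the traceless part $\hat K$ of $K$ to the radial geometry, and the centroaffine Codazzi equation then propagates these relations as first-order ordinary differential equations in $r$. The goal is to show that the only solution compatible with smoothness at the two poles, and with the compactness of $M^n$, is $\hat K\equiv0$, equivalently $dH\equiv0$. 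I expect this decoupling and integration of the Gauss--Codazzi system along the conformal flow to be the main obstacle, and it is here that the hypothesis $n\ge3$ should enter, since the argument needs the level spheres $\{f=\mathrm{const}\}$ to have dimension $n-1\ge2$.

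Once $H$ is known to be constant the remaining steps are immediate. From $\int_{M}H\,dV_{\hat g}=0$ we get $H\equiv0$; then $\hat\nabla T=0$ and $\int_{M}|T|^2\,dV_{\hat g}=-\int_{M}f\,\hat\Delta f\,dV_{\hat g}=0$ force $T\equiv0$, so the conformal field was in fact trivial. In all cases $H\equiv0$ is constant, and Theorem~1 of \cite{W} identifies $x(M^n)$ as an ellipsoid, with the origin in its interior by local strong convexity.
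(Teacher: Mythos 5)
Your opening moves are sound and in fact coincide with the paper's own: Lemma \ref{lem:2.1}, the exactness of $T^\sharp$ on a manifold diffeomorphic to the sphere, the potential $f$ with ${\rm Hess}\,f=H\,h$, and the Tashiro-type warped-product structure with two poles all appear in Section \ref{sect:4}. But your central reduction --- ``the whole problem is therefore reduced to proving that the conformal factor $H$ is constant'' --- is a reduction to a \emph{false} statement, so the heart of your argument cannot be carried out. Any ellipsoid containing the origin in its interior but not centered at the origin is itself a centroaffine Tchebychev hyperovaloid (every quadric is a Tchebychev hypersurface, cf. Section \ref{sect:3} and \cite{LSW}), yet its centroaffine mean curvature $H=\alpha$ is non-constant and its Tchebychev field $T$ is non-trivial; this is precisely the phenomenon flagged in Remark \ref{rem:1.1}. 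Your proposed endgame ``$H$ constant, hence $H\equiv 0$, hence $T\equiv 0$'' would therefore exclude genuine examples satisfying both the hypotheses and the conclusion of Theorem \ref{thm:1.2}, so no Gauss--Codazzi analysis along the conformal flow can deliver $dH\equiv0$. The specific false step is your claimed equivalence ``$\hat K\equiv 0$, equivalently $dH\equiv 0$'': vanishing of the traceless difference tensor \eqref{eqn:2.6} characterizes hyperquadrics, and hyperquadrics generally have non-constant $H$, so the two conditions are not equivalent --- the first is the right target, the second is unattainable.

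For comparison, the paper reaches the correct target $\tilde K\equiv 0$ without ever controlling $H$. Lemma \ref{lm:4.1} first shows that $K_TT=\tfrac{3n}{n+2}\|T\|^2T$ holds automatically on a hyperovaloid, because $Z:=K_TT-\tfrac{3n}{n+2}\|T\|^2T$ is a closed, divergence-free (hence harmonic) vector field on a sphere and must vanish. This places the hypersurface in the setting of Propositions \ref{prop:3.1} and \ref{prop:3.2}, which in the relevant case give the decomposition $x=\gamma_1(t)\varphi+\gamma_2(t)C$ with $\varphi:N\to\mathbb{R}^{n}$ an affine hypersphere lying in a hyperplane. Since the umbilic leaves are compact (spheres, after the conformal equivalence of $(M^n,h)$ with the round sphere), Blaschke--Deicke is applied to $\varphi$ --- one dimension down, not to $x$ itself --- giving $K'=0$; combining this with \eqref{eqn:3.29}, \eqref{eqn:3.8} and \eqref{eqn:4.1} yields $\tilde K=0$, so $x(M^n)$ is a hyperquadric and hence an ellipsoid with the origin inside. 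If you wish to salvage your plan, replace the goal $dH\equiv 0$ by the goal $\tilde K\equiv 0$, and expect the final identification to come from the hyperquadric characterization rather than from Wang's constant-$H$ theorem.
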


\begin{remark}\label{rem:1.1}
A centroaffine hypersurface $x: M^n\rightarrow\mathbb{R}^{n+1}$ is a Tchebychev
hypersurface if and only if it satifies $\mathcal{T}=\alpha\,{\rm id}$, where
$\alpha$ is a smooth function on $M^n$. However, different from the affine
hyperspheres in equiaffine differential geometry, where the equiaffine shape
operator must be a constant multiple of the identity isomorphism of the tangent
space, here the function $\alpha$ can be not a constant, even if for the ellipsoids
or for the general hyperquadrics. This significant difference explains partially
why the proof of Theorem \ref{thm:1.2} is complicated and very different from
that of Blaschke and Deicke's theorem. To have a better understanding of these
respects, we would suggest the readers to compare the proof of Theorem 3.35, p.145
in \cite{LSZH} and that of Theorem 5 in \cite{L}.
\end{remark}


\numberwithin{equation}{section}
\section{Preliminaries}\label{sect:2}

In this section, we briefly recall some basic facts about centroaffine
hypersurfaces. More details are referred to the monographs \cite{LSZH,NS,SSV}
and the references \cite{LW,W}.

Let $\mathbb{R}^{n+1}$ be the $(n+1)$-dimensional affine space equipped with
its canonical flat connection $D$. Let $M^n$ be a connected $n$-dimensional
smooth manifold. An immersion $x: M^n\rightarrow \mathbb{R}^{n+1}$ is said to
be a centroaffine hypersurface if, for each point $x\in M^n$, the position
vector $x$ from the origin $O\in\mathbb{R}^{n+1}$ is transversal to the tangent
space $T_xM^n$ of $M^n$ at $x$. In that situation, the position vector $x$
defines the {\it centroaffine normalization} modulo orientation. For any vector
fields $X$ and $Y$ tangent to $M^n$, we have the centroaffine formula of Gauss:
\begin{equation}\label{eqn:2.1}
D_{X}x_{*}(Y)=x_{*}(\nabla_{X}Y) + h(X,Y)(-\varepsilon x) ,
\end{equation}
where $\varepsilon=1$ or $-1$. Moreover, associated with \eqref{eqn:2.1} we will
call $-\varepsilon x$, $\nabla$ and $h$ the centroaffine normal, the induced
(centroaffine) connection and the centroaffine metric, respectively. In this
paper, we will consider only locally strongly convex centroaffine hypersurfaces
such that the bilinear $2$-form $h$ defined by \eqref{eqn:2.1} is definite; and
we will choose $\varepsilon$ such that the centroaffine metric $h$ is positive
definite.

Let $x: M^n\rightarrow \mathbb{R}^{n+1}$ be a locally strongly convex centroaffine
hypersurface and $\hat\nabla$ be the Levi-Civita connection of its centroaffine
metric $h$. Then the tensor $K$, defined by $K(X,Y):=K_XY:=\nabla_XY-\hat{\nabla}_XY$,
is called the {\it difference tensor} of the centroaffine hypersurface. It is
symmetric as both connections $\nabla$ and $\hat\nabla$ are torsion free.
Let $\hat{R}$ denote the Riemannian curvature tensor of the centroaffine
metric $h$, then the following Gauss and Codazzi equations hold:
\begin{equation}\label{eqn:2.2}
\hat{R}(X,Y)Z=\varepsilon(h(Y,Z)X-h(X,Z)Y)-[K_{X}, K_{Y}]Z,
\end{equation}
\begin{equation}\label{eqn:2.3}
(\hat{\nabla}_ZK)(X,Y)=(\hat{\nabla}_XK)(Z,Y).
\end{equation}

Moreover, we further have the following totally symmetry equation
\begin{equation}\label{eqn:2.4}
h((\hat{\nabla}_ZK)(X,Y),W)=h((\hat{\nabla}_XK)(Z,W),Y).
\end{equation}

Associated to a centroaffine hypersurface $x: M^n\rightarrow \mathbb{R}^{n+1}$,
we can define the {\it Tchebychev form} $T^\sharp$ and the {\it Tchebychev vector
field} $T$ in implicit form by
\begin{equation}\label{eqn:2.5}
T^\sharp(X)=\tfrac1n{\rm Tr}\,(K_{X}),\ \ \ h(T, X)=T^\sharp(X),\ \ \forall\, X\in TM^n.
\end{equation}

Moreover, using the difference tensor $K$ and the Tchebychev vector field $T$,
one can further define the symmetric {\it traceless difference tensor} $\tilde{K}$ by
\begin{equation}\label{eqn:2.6}
\tilde{K}(X,Y):=K(X,Y)-\tfrac{n}{n+2}\big[h(X,Y)T+h(X,T)Y
+h(Y,T)X\big].
\end{equation}

It is well-known that $\tilde{K}$ vanishes if and only if $x(M^n)$
lies in a hyperquadric (cf. Section 7.1 in \cite{SSV}; Lemma 2.1 and
Remark 2.2 in \cite{LLSSW}).

As have been stated in the Introduction, the {\it centroaffine shape operator}
$\mathcal{T}$ of a centroaffine hypersurface $x:M^n\rightarrow \mathbb{R}^{n+1}$,
introduced by C.P. Wang \cite{W} and is also called the {\it Tchebychev operator},
is a homomorphism mapping $\mathcal{T}: TM\rightarrow TM$, defined by
\begin{equation}\label{eqn:2.7}
\mathcal{T}(X):=\hat{\nabla}_XT,\ \ \forall\,X\in TM^n.
\end{equation}
Then, the well-defined function $H:=\frac{1}{n}{\rm Tr}\,\mathcal{T}$ was named
as the {\it centroaffine mean curvature} of $x$. This is a meaningful terminology
because, according to C.P. Wang \cite{W}, the hypersurfaces with $H=0$ are exactly
the critical hypersurfaces of the centroaffine area functional. Moreover, related
to the centroaffine shape operator $\mathcal{T}$, it is interesting to consider an
important subclass of the centroaffine hypersurfaces, named as the {\it Tchebychev
hypersurfaces}, which is defined as below:

\begin{definition}[\cite{LW1}]\label{def:2.1}
Let $x:M^n\rightarrow \mathbb{R}^{n+1}$ be a centroaffine hypersurface such that
its Tchebychev operator $\mathcal{T}$ is proportional to the identity isomorphism
${\rm id}:TM^n\to TM^n$, i.e., $\mathcal{T}=\frac1n({{\rm div}\,T})\,{\rm id}$.
Then, $x$ is called a Tchebychev hypersurface.
\end{definition}

As it was pointed out in \cite{LSW} that the Tchebychev hypersurfaces satisfy
certain systems of second order PDE, and some of these systems play an important
role in the general context of conformal geometry. In this context, we shall
further emphasize the following important Riemannian geometric characterization
of the Tchebychev hypersurfaces:
\begin{lemma}\label{lem:2.1}
For a centroaffine hypersurface $x: M^n\rightarrow \mathbb{R}^{n+1}$, the Tchebychev
vector field $T$ is a closed vector field in the sense that the Tchebychev form $T^\sharp$
is a closed form. Moreover, a centroaffine hypersurface $x: M^n\rightarrow \mathbb{R}^{n+1}$
is a Tchebychev hypersurface if and only if, associated to the centroaffine metric $h$,
its Tchebychev vector field $T$ is a conformal vector field.
\end{lemma}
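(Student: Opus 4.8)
The plan is to reduce both assertions to a single fact: that the Tchebychev operator $\mathcal{T}$ is self-adjoint with respect to the centroaffine metric $h$. I first observe that, since $T^\sharp=h(T,\cdot)$ and $h$ is $\hat\nabla$-parallel, the closedness condition $dT^\sharp=0$, written as $(\hat\nabla_XT^\sharp)(Y)=(\hat\nabla_YT^\sharp)(X)$, is equivalent to $h(\hat\nabla_XT,Y)=h(\hat\nabla_YT,X)$, i.e. to $h(\mathcal{T}(X),Y)=h(\mathcal{T}(Y),X)$ by \eqref{eqn:2.7}. So establishing the first claim of the lemma is the same as proving $\mathcal{T}=\mathcal{T}^{*}$.

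To prove this self-adjointness, I would differentiate the defining trace formula \eqref{eqn:2.5}. Choosing a local $h$-orthonormal frame $\{e_i\}$ and using that covariant differentiation commutes with metric contraction, one gets
\[
(\hat\nabla_YT^\sharp)(X)=\tfrac1n\textstyle\sum_i h\big((\hat\nabla_YK)(X,e_i),e_i\big).
\]
Now I apply the totally symmetric equation \eqref{eqn:2.4} with the substitution $(Z,X,Y,W)=(Y,X,e_i,e_i)$, which yields $h((\hat\nabla_YK)(X,e_i),e_i)=h((\hat\nabla_XK)(Y,e_i),e_i)$ for each $i$. Summing over $i$ gives $(\hat\nabla_YT^\sharp)(X)=(\hat\nabla_XT^\sharp)(Y)$, i.e. $dT^\sharp=0$. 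This simultaneously proves that $T$ is closed and that $\mathcal{T}$ is self-adjoint.

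For the second assertion I would compute the Lie derivative of $h$ along $T$. Using metric compatibility of $\hat\nabla$ together with \eqref{eqn:2.7},
\[
(\mathcal{L}_Th)(X,Y)=h(\hat\nabla_XT,Y)+h(X,\hat\nabla_YT)=h(\mathcal{T}(X),Y)+h(\mathcal{T}(Y),X),
\]
and by the self-adjointness just established this equals $2\,h(\mathcal{T}(X),Y)$. Hence $T$ is conformal, meaning $\mathcal{L}_Th=2\rho\,h$ for some smooth $\rho$, if and only if $h(\mathcal{T}(X),Y)=\rho\,h(X,Y)$ for all $X,Y$, that is, $\mathcal{T}=\rho\,{\rm id}$. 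Taking the trace identifies $\rho=\tfrac1n{\rm Tr}\,\mathcal{T}=\tfrac1n{\rm div}\,T=H$, so the conformality of $T$ is precisely the defining condition of a Tchebychev hypersurface in Definition \ref{def:2.1}.

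I do not anticipate a genuine obstacle in the verification; the only point demanding care is the index bookkeeping when inserting \eqref{eqn:2.4} into the trace, and confirming that the frame computation is tensorial (it is, since both sides are the values of the globally defined $\hat\nabla T^\sharp$). The real content of the lemma lies not in these manipulations but in recognizing that \emph{closed} and \emph{conformal}, in the Riemannian sense, are the correct translations of the centroaffine structure — which is exactly what makes the closed-conformal-vector-field machinery available for the proof of Theorem \ref{thm:1.2}.
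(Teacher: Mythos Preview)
Your proposal is correct and follows essentially the same approach as the paper: both reduce the second assertion to the Lie-derivative computation $(\mathcal{L}_Th)(X,Y)=h(\hat\nabla_XT,Y)+h(X,\hat\nabla_YT)$ combined with the self-adjointness of $\mathcal{T}$. The only difference is that the paper does not prove the first assertion (closedness/self-adjointness) but simply attributes it to Wang~\cite{W}, whereas you supply the standard derivation from the total symmetry \eqref{eqn:2.4} of $\hat\nabla K$; your verification is correct and is presumably what Wang's original argument amounts to.
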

\begin{proof}
The first statement, which is equivalent to that the Tchebychev operator $\mathcal{T}$ is
self-adjoint with respect to the centroaffine metric $h$, was first shown by C.P. Wang \cite{W}.
Next, let $x: M^n\rightarrow \mathbb{R}^{n+1}$ be a centroaffine hypersurface with Tchebychev
vector field $T$. If it is a Tchebychev hypersurface, then we have
\begin{equation}\label{eqn:2.8}
(\mathcal{L}_Th)(X,Y)=h(\hat{\nabla}_XT,Y)+h(X,\hat{\nabla}_YT)=\tfrac2n({\rm div}\,T)h(X,Y),
\end{equation}
where $\mathcal{L}_T$ denotes the Lie derivative with respect to the Tchebychev vector field $T$.
This shows that $\mathcal{L}_Th=\tfrac2n({\rm div}\,T)h$. Thus, $T$ is a conformal vector field.
Conversely, assume that $T$ is a conformal vector field relative to $h$, i.e., it holds that
$$
(\mathcal{L}_Th)(X,Y)=2fh(X,Y)
$$
for any vector fields $X,Y$ and some smooth function $f$ on $M^n$. Then, by using that
$(\mathcal{L}_Th)(X,Y)=h(\hat{\nabla}_XT,Y)+h(X,\hat{\nabla}_YT)$ and the self-adjointness
of $\mathcal{T}$, namely that
\begin{equation}\label{eqn:2.9}
h(\hat{\nabla}_XT,Y)=h(\hat{\nabla}_YT,X),
\end{equation}
we derive $\hat{\nabla}_XT=fX$ for any vector field $X\in TM^n$. It follows that
$x: M^n\rightarrow \mathbb{R}^{n+1}$ is a Tchebychev hypersurface.
\end{proof}

Before concluding this section, we would emphasize that Riemannian manifolds with closed
conformal vector fields have been extensively studied, see the papers e.g. \cite{Ca,DRS,O,ST,TW,T}.
In next sections, we shall work for the application of the useful characterization of the
centroaffine Tchebychev hypersurfaces, established by Lemma \ref{lem:2.1}, so as to complete
the proof of Theorem \ref{thm:1.2}.

\section{Local properties of the Tchebychev hypersurfaces}\label{sect:3}

In this section, we will study the local properties of centroaffine Tchebychev
hypersurfaces in $\mathbb{R}^{n+1}$. Since our concern is the PROBLEM, and that
we already have Theorem \ref{thm:1.1}, in sequel we assume that $n\geq3$.

Recall that in \cite{LSW}, Liu, Simon and Wang established several local geometric
characterizations of the Tchebychev hypersurfaces and, as a corollary, they showed
that any quadric is a Tchebychev hypersurface.

In view of Lemma \ref{lem:2.1}, and according to Lemma 1 of \cite{RU} (cf. also
Lemma 1 of \cite{CMU}) which collects some results about Riemannian manifolds
admitting closed and conformal vector fields, we immediately obtain the following
lemma.

\begin{lemma}[cf. \cite{RU}]\label{lem:3.1}
Let $x: M^n\rightarrow\mathbb{R}^{n+1}$ be a centroaffine Tchebychev hypersurface
with nontrivial Tchebychev vector field $T$ and $\mathcal{T}=\alpha\,{\rm id}$.
Then we have:
\begin{enumerate}
\item[(i)] The norm $\|T\|$ with respect to the centroaffine metric $h$, the function
$\alpha$ and the curvature tensor $\hat{R}$ satisfy the following relations:
\begin{equation}\label{eqn:3.1}
\begin{aligned}
\hat{\nabla}\|T\|^2&=2\alpha T,\ \ \|T\|^2\hat{\nabla}\alpha=T(\alpha)T,\\
\|T\|^2\hat{R}(X,Y)T&=-T(\alpha)(h(T,Y)X-h(T,X)Y).
\end{aligned}
\end{equation}
\item[(ii)] The zeros of $T$ is a discrete set. Moreover, $T$ has nonvanishing divergence
at its zeros.
\item[(iii)] If we denote $\widetilde{M}=\{p\in M^n\mid T(p)\neq0\}$, then the distribution
$$
p\in \widetilde{M}\to \mathfrak{D}(p):=\{v\in T_pM^n\mid p\in\widetilde{M}\ {\rm and}\ h(v,T)=0\}
$$
defines an umbilical foliation on $(\widetilde{M},h)$. In particular, the functions
$\|T\|^2$ and $\alpha$ are constant on the connected leaves of $\mathfrak{D}$.
\item[(iv)] If $g=\|T\|^{-2}h$, then $(\widetilde{M},g)$ is locally isometric to
$(I\times N,dt^2\oplus g')$ and $T=(\partial/\partial t,0)$, where $I$
is an open interval in $\mathbb{R}$, $\{t\}\times N$ is a leaf of the
foliation $\mathfrak{D}$ for any $t\in\mathbb{R}$.
\end{enumerate}
\end{lemma}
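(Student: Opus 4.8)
The plan is to reduce everything to the single \emph{concircular equation}
$$\hat\nabla_X T=\alpha\,X,\qquad \alpha=\tfrac1n\,\mathrm{div}\,T,$$
which for a Tchebychev hypersurface is nothing but the defining condition $\mathcal{T}=\alpha\,\mathrm{id}$ read through \eqref{eqn:2.7}. Once $T$ is recognized as a closed conformal (concircular) field via Lemma \ref{lem:2.1}, statements (i)--(iv) are precisely the classical structural facts collected in \cite{RU,CMU}, and I would either cite them directly or re-derive them as sketched below.

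For (i) I would first differentiate the norm: $X(\|T\|^2)=2h(\hat\nabla_X T,T)=2\alpha\,h(X,T)$, which gives $\hat\nabla\|T\|^2=2\alpha T$. Next I would compute the curvature straight from the concircular equation,
$$\hat R(X,Y)T=\hat\nabla_X(\alpha Y)-\hat\nabla_Y(\alpha X)-\alpha[X,Y]=X(\alpha)Y-Y(\alpha)X,$$
the last equality using that $\hat\nabla$ is torsion free. To obtain $\|T\|^2\hat\nabla\alpha=T(\alpha)T$ I would differentiate the identity $\hat\nabla\|T\|^2=2\alpha T$ once more and exploit the symmetry of the Hessian of $\|T\|^2$; since $\hat\nabla_X\hat\nabla\|T\|^2=2X(\alpha)T+2\alpha^2X$, symmetry forces $X(\alpha)h(T,Y)=Y(\alpha)h(T,X)$ for all $X,Y$, i.e. $\hat\nabla\alpha\wedge T=0$, so $\hat\nabla\alpha$ is pointwise proportional to $T$ on $\widetilde M$; pairing with $T$ fixes the factor and gives $\|T\|^2\hat\nabla\alpha=T(\alpha)T$. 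Substituting this into the curvature formula yields the third identity in \eqref{eqn:3.1}.

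For (ii) I would evaluate the Hessian of $\|T\|^2$ at a zero $p$ of $T$: from $\hat\nabla_X\hat\nabla\|T\|^2=2X(\alpha)T+2\alpha^2X$ it equals $2\alpha(p)^2\,\mathrm{id}$ there, so whenever $\alpha(p)\neq0$ the point $p$ is a nondegenerate minimum of $\|T\|^2$ and hence an isolated zero; since $\mathrm{div}\,T=n\alpha$, this is exactly the nonvanishing of the divergence at $p$. For (iii), note that $\mathfrak{D}=\ker T^\sharp$ is the kernel of the closed $1$-form $T^\sharp$ by Lemma \ref{lem:2.1}, hence integrable by Frobenius; the unit normal of a leaf is $T/\|T\|$, and $\hat\nabla_X T=\alpha X$ shows the second fundamental form of a leaf is a multiple of $h$, i.e. the foliation is umbilical, while $\hat\nabla\|T\|^2=2\alpha T$ and $\|T\|^2\hat\nabla\alpha=T(\alpha)T$ are both proportional to $T$, which is orthogonal to $\mathfrak{D}$, so $\|T\|^2$ and $\alpha$ are leafwise constant. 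Finally (iv) is the local splitting produced by the conformal change $g=\|T\|^{-2}h$, under which $T$ becomes a unit field that is $g$-parallel along itself and orthogonal to the leaves, giving the local isometry with $(I\times N,dt^2\oplus g')$ and $T=\partial/\partial t$.

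The step I expect to be the main obstacle is the sharpening in (ii) that $\alpha(p)\neq0$ at every zero, equivalently the nonvanishing of $\mathrm{div}\,T$ there: the Hessian computation only establishes nondegeneracy \emph{provided} $\alpha(p)\neq0$, and ruling out $\alpha(p)=0$ genuinely needs the nontriviality of $T$ through a uniqueness argument (if $T$ and $\alpha$ both vanished at $p$, the concircular equation would force $T$ to vanish identically near $p$). The clean way to package this, as well as the conformal splitting in (iv), is to invoke the established theory of closed conformal vector fields in \cite{RU,CMU} rather than reprove it, which is precisely the route the statement already signals.
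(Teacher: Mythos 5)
Your proposal is correct and takes essentially the same approach as the paper: the paper gives no independent proof of this lemma, but obtains it immediately by combining Lemma \ref{lem:2.1} (which exhibits $T$ as a closed conformal field satisfying $\hat{\nabla}_XT=\alpha X$) with Lemma 1 of \cite{RU} (cf. also Lemma 1 of \cite{CMU}), exactly as you propose. Your supplementary derivations of (i)--(iv) are sound, and you correctly isolate the one genuinely nontrivial point---ruling out $\alpha(p)=0$ at a zero of $T$, which requires the uniqueness argument for the closed ODE system satisfied by the pair $(T,\alpha)$ along curves---and sensibly defer it to the cited references, which is precisely what the paper does.
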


Next, closely related to the proof of Theorem \ref{thm:1.2}, we first study
centroaffine Tchebychev hypersurfaces which satisfy the condition $K_TT=\lambda T$
for some function $\lambda\in C^\infty(M^n)$. Following the notations of
Lemma \ref{lem:3.1}, we begin with the following lemma.

\begin{lemma}\label{lem:3.2}
Let $x: M^n\rightarrow\mathbb{R}^{n+1}$ be a centroaffine Tchebychev hypersurface
whose Tchebychev vector field $T$ is nontrivial and satisfies $K_TT=\lambda T$. Then:
\begin{enumerate}
\item[(i)] For any point $p\in \widetilde{M}$, the eigenvalues
$\{\lambda_i\}_{2\leq i\leq n}$ of $K_T$ on $\mathfrak{D}(p)$ satisfy the quadratic
equation
\begin{equation}\label{eqn:3.2}
\lambda_i^2-\lambda\lambda_i+\varepsilon\|T\|^2+\alpha'=0,
\end{equation}
where $\alpha':=\frac{d\alpha}{dt}$. In particular, at most two of $\{\lambda_i\}_{2\leq i\leq n}$
are distinct.

\item[(ii)] If $V,W\in \mathfrak{D}$ are eigenvectors of $K_T$ corresponding to
different eigenvalues, then $K(V,W)=0$.

\item[(iii)] The eigenvalues of $K_T$ are constant on the connected leaves of
the foliation $\mathfrak{D}$.
\end{enumerate}
\end{lemma}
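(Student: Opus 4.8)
The plan is to exploit the two structural facts at hand: the Tchebychev condition, which gives $\hat{\nabla}_X T=\alpha X$ for all $X$, and the total symmetry of the cubic form $C(X,Y,Z):=h(K(X,Y),Z)$. I would first record two preliminary observations. Since $C$ is totally symmetric, the operator $K_T$ is self-adjoint with respect to $h$; and for $V\in\mathfrak{D}$ one has $h(K_TV,T)=h(K(T,T),V)=\lambda h(T,V)=0$, so $K_T$ preserves the distribution $\mathfrak{D}$. Hence $K_T|_{\mathfrak{D}(p)}$ is diagonalizable with real eigenvalues $\{\lambda_i\}_{2\le i\le n}$. I would also note that, in the product coordinates of Lemma \ref{lem:3.1}(iv), $\alpha$ depends only on $t$, so that $\alpha'=T(\alpha)$.

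For (i), take $V\in\mathfrak{D}$ and compute $\hat{R}(T,V)T$ in two ways. The curvature relation in Lemma \ref{lem:3.1}(i), with $h(T,V)=0$, yields $\hat{R}(T,V)T=T(\alpha)V=\alpha' V$. The Gauss equation \eqref{eqn:2.2} gives $\hat{R}(T,V)T=-\varepsilon\|T\|^2 V-[K_T,K_V]T$; using $K_VT=K_TV$ and $K_TT=\lambda T$ one finds $[K_T,K_V]T=K_T^2V-\lambda K_TV$. Equating the two expressions produces the operator identity $K_T^2V-\lambda K_TV+(\varepsilon\|T\|^2+\alpha')V=0$ on $\mathfrak{D}$, and evaluating on an eigenvector gives \eqref{eqn:3.2}. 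Since a quadratic has at most two roots, at most two of the $\lambda_i$ are distinct.

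Part (ii) should come out even more directly from the same circle of ideas. For $V,W\in\mathfrak{D}$ the curvature relation in Lemma \ref{lem:3.1}(i) forces $\hat{R}(V,W)T=0$, so the Gauss equation \eqref{eqn:2.2} reduces to $[K_V,K_W]T=0$. Rewriting $K_VT=K_TV$, $K_WT=K_TW$ and using that $V,W$ are eigenvectors of $K_T$ with eigenvalues $\lambda_V\ne\lambda_W$, one computes $[K_V,K_W]T=(\lambda_W-\lambda_V)K(V,W)$; hence $K(V,W)=0$.

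The main work, and the step I expect to be the real obstacle, is (iii). By Lemma \ref{lem:3.1}(iii) the functions $\|T\|^2$ and $\alpha$ (hence $\alpha'$) are constant on the leaves of $\mathfrak{D}$, so by \eqref{eqn:3.2} it suffices to prove that $\lambda$ is constant on the leaves, i.e. $U(\lambda)=0$ for every $U\in\mathfrak{D}$; the eigenvalues $\lambda_i$, being the roots of a quadratic with leaf-constant coefficients, are then forced to be leaf-constant by continuity. To show $U(\lambda)=0$ I would differentiate $K(T,T)=\lambda T$ and invoke the Codazzi equation. The key leverage is that \eqref{eqn:2.3}--\eqref{eqn:2.4} make $\hat{\nabla}C$ totally symmetric in all four arguments, so $h((\hat{\nabla}_UK)(T,T),T)=h((\hat{\nabla}_TK)(U,T),T)$. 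Expanding the left side with $\hat{\nabla}_UT=\alpha U$ and $K(T,T)=\lambda T$ gives $U(\lambda)\|T\|^2$, while a careful expansion of the right side, repeatedly invoking the identity $h(K(X,T),T)=\lambda h(T,X)$ together with $\hat{\nabla}_TT=\alpha T$, makes every term vanish. Since $\|T\|^2\ne0$ on $\widetilde{M}$ this yields $U(\lambda)=0$. The delicate point is precisely the bookkeeping that forces all terms on the right side to cancel, where the interplay between the Tchebychev identity $\hat{\nabla}_XT=\alpha X$ and the total symmetry of $\hat{\nabla}C$ is essential.
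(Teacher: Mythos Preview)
Your proposal is correct and follows essentially the same approach as the paper. Parts (i) and (ii) are identical to the paper's argument (computing $\hat{R}(\cdot,\cdot)T$ via \eqref{eqn:3.1} and via the Gauss equation \eqref{eqn:2.2} and comparing), and for (iii) the paper likewise differentiates $K_TT=\lambda T$ and uses the total symmetry \eqref{eqn:2.4} to conclude $X(\lambda)=0$ for $X\in\mathfrak{D}$; the only cosmetic difference is that the paper first records the general relation $X(\lambda)h(T,Y)=Y(\lambda)h(T,X)$ before specializing, whereas you compute $h((\hat{\nabla}_UK)(T,T),T)$ directly.
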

\begin{proof}
At any $p\in\widetilde{M}$, since $T$ is an eigenvector of $K_{T}$ and $K_{T}$ is self-adjoint
with respect to the centroaffine metric, $K_{T}$ can be diagonalized on
$\mathfrak{D}$. Let $\{X_i\}_{2\le i\le n}\subset\mathfrak{D}$ be the mutually orthogonal eigenvectors
of $K_{T}$ with corresponding eigenvalues $\{\lambda_i\}_{2\le i\le n}$, i.e.,
$$
K_TX_i=\lambda_iX_i,\ \ i=2,\ldots,n.
$$
Then, the third equation in \eqref{eqn:3.1} implies that
\begin{equation}\label{eqn:3.3}
\hat{R}(X_i,T)T=-\alpha'X_i,\ \ \hat{R}(X_i,X_j)T=0.
\end{equation}

On the other hand, by using the Gauss equation, we obtain
\begin{align}\label{eqn:3.4}
\begin{split}
&\hat{R}(X_i,T)T=(\lambda_i^2-\lambda\lambda_i+\varepsilon\|T\|^2)X_i,\\
&\hat{R}(X_i,X_j)T=(\lambda_i-\lambda_j)K(X_i,X_j).
\end{split}
\end{align}
From \eqref{eqn:3.3} and \eqref{eqn:3.4}, the assertions (i) and (ii) follows.

If we derivate $K_TT=\lambda T$ with respect to $X\in TM^n$, we obtain
\begin{equation}\label{eqn:3.5}
(\hat{\nabla}_XK)(T,T)+2\alpha K_TX=X(\lambda)T+\alpha\lambda X.
\end{equation}
It follows that for any vector field $Y\in TM^n$ there holds
\begin{equation}\label{eqn:3.6}
h((\hat{\nabla}_XK)(T,T),Y)+2\alpha h(K_TX,Y)=X(\lambda)h(T,Y)+\alpha\lambda h(X,Y).
\end{equation}

From \eqref{eqn:3.6} and noticing that both $h((\hat{\nabla}_\cdot K)(\cdot,\cdot),\cdot)$
and $h(K(\cdot,\cdot),\cdot)$ are totally symmetric, we get
$X(\lambda)h(T,Y)=Y(\lambda)h(T,X)$ for any $X,Y\in TM^n$. It follows that
\begin{equation}\label{eqn:3.7}
X(\lambda)T=h(T,X)\hat{\nabla}\lambda,\ \ X\in TM^n.
\end{equation}
Hence, we have $X(\lambda)=0$ for any $X\in\mathfrak{D}$.

From item (iii) of Lemma \ref{lem:3.1}, we know that $X(\alpha)=0$ for $X\in\mathfrak{D}$.
This, together with (iv) of Lemma \ref{lem:3.1}, implies that $X(\alpha')=0$ for any
$X\in\mathfrak{D}$. It follows that the solutions of \eqref{eqn:3.2} are constant on
each connected leaves of the foliation $\mathfrak{D}$. Thus the assertion (iii) follows.
\end{proof}

Now, we can further prove the following proposition.
\begin{proposition}\label{prop:3.1}
Let $x: M^n\rightarrow\mathbb{R}^{n+1}$ be a centroaffine Tchebychev hypersurface
with nontrivial $T$ such that $K_TT=\lambda T$. Then one of the following two cases occurs:
\begin{enumerate}
\item[(i)] $\widetilde{M}=M$, i.e. $T$ has no zeros; $\hat{\nabla}T=0$, and $K_{T}$ has
exactly two distinct constant eigenvalues on $\mathfrak{D}$;

\item[(ii)] $K_TV=\mu V$ for any $V\in \mathfrak{D}$. Moreover, we have
\begin{equation}\label{eqn:3.8}
\mu'=(\lambda-\mu)\alpha,\ \ n\|T\|^2=\lambda+(n-1)\mu,
\end{equation}
and that $c_0:=\varepsilon \|T\|^2-\mu^2+\alpha^2$ is a constant.
\end{enumerate}
\end{proposition}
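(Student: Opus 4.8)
The plan is to analyze the behavior of the eigenvalues $\{\lambda_i\}_{2\le i\le n}$ of $K_T$ on the distribution $\mathfrak{D}$, building on Lemma \ref{lem:3.2}. From part (i) of that lemma, each $\lambda_i$ satisfies the quadratic \eqref{eqn:3.2}, so there are at most two distinct values, say $\mu$ and $\nu$, among them; by part (iii) these are constant along the leaves of $\mathfrak{D}$, hence functions of $t$ alone on $\widetilde{M}$ (using the product structure from Lemma \ref{lem:3.1}(iv)). The dichotomy in the Proposition should correspond to whether both values $\mu,\nu$ genuinely occur (case (i)) or whether $K_T$ has a single eigenvalue $\mu$ on $\mathfrak{D}$ (case (ii)). So the first step is to split into these two subcases according to the number of distinct eigenvalues of $K_T$ restricted to $\mathfrak{D}$.

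In the two-distinct-eigenvalue subcase, I would aim to show $\hat\nabla T = 0$ and $\widetilde M = M$. The relations \eqref{eqn:3.1} tie $\hat\nabla\alpha$, $\hat\nabla\|T\|^2$, and the curvature to $T(\alpha)$. Since by Vieta on \eqref{eqn:3.2} the sum $\mu+\nu=\lambda$ and product $\mu\nu=\varepsilon\|T\|^2+\alpha'$ are determined, and both $\mu,\nu$ are forced to be \emph{constants} (two distinct roots of a $t$-dependent quadratic cannot both vary while staying roots of the same equation without forcing the discriminant and coefficients to be constant), I expect to deduce that $\lambda$ and $\varepsilon\|T\|^2+\alpha'$ are constant. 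Differentiating along $T$ and feeding this back into the first equation of \eqref{eqn:3.1}, namely $\hat\nabla\|T\|^2=2\alpha T$, together with the second equation $\|T\|^2\hat\nabla\alpha=T(\alpha)T$, should force $\alpha\equiv 0$ and $T(\alpha)=0$; then $\mathcal{T}=\alpha\,\mathrm{id}=0$ gives $\hat\nabla T=0$, so $\|T\|$ is constant, $T$ has no zeros, and $\widetilde M=M$. This yields case (i).

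In the single-eigenvalue subcase $K_TV=\mu V$ for all $V\in\mathfrak{D}$, I would derive the two identities in \eqref{eqn:3.8} and the constancy of $c_0$. The trace relation is the most direct: computing $\mathrm{Tr}\,K_T$ via \eqref{eqn:2.5} gives $n\,h(T,T)=n T^\sharp(T)=\mathrm{Tr}\,K_T=\lambda+(n-1)\mu$, which is exactly the second equation in \eqref{eqn:3.8}. For the first equation $\mu'=(\lambda-\mu)\alpha$, I would differentiate the relation $K_TV=\mu V$ along $T$ (using $\hat\nabla_T T=\alpha T$ from $\mathcal{T}=\alpha\,\mathrm{id}$) and exploit the Codazzi-type total symmetry \eqref{eqn:2.4} of $\hat\nabla K$, much as in the derivation of \eqref{eqn:3.5}, to isolate how $\mu$ evolves in $t$. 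Finally, to show $c_0=\varepsilon\|T\|^2-\mu^2+\alpha^2$ is constant, I would differentiate it along $T$, substituting $\frac{d}{dt}\|T\|^2=2\alpha$ (from $\hat\nabla\|T\|^2=2\alpha T$ reparametrized by $t$), the just-obtained $\mu'=(\lambda-\mu)\alpha$, and $\alpha'=\mu\lambda-\varepsilon\|T\|^2-\mu^2$ coming from the product-of-roots reading of \eqref{eqn:3.2} when $\mu=\nu$; the terms should cancel to give $c_0'=0$.

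The main obstacle I anticipate is the bookkeeping in the single-eigenvalue subcase: correctly extracting $\alpha'$ in terms of $\mu,\lambda,\|T\|^2$ from the degenerate (double-root) version of \eqref{eqn:3.2}, and verifying that the derivative of $c_0$ cancels exactly. This requires careful use of the parametrization by $t$ and of which quantities are functions of $t$ alone versus genuinely point-dependent; a sign error in $\varepsilon$ or in $\hat\nabla_T T=\alpha T$ would propagate. A secondary subtlety is justifying rigorously in the first subcase that two distinct roots of a one-parameter family of quadratics must individually be constant — this needs the constancy of the symmetric functions $\mu+\nu$ and $\mu\nu$ and then an argument that each root is determined by these, which is clean but must be stated to avoid a gap.
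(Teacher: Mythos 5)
Your treatment of the single-eigenvalue case is essentially the paper's own argument (the trace identity via \eqref{eqn:2.5}, differentiating $K_TV=\mu V$ along $T$ against \eqref{eqn:3.5}, then checking $\hat\nabla c_0=0$), and it goes through, apart from one factor slip you partly anticipated: in the parametrization of Lemma \ref{lem:3.1}(iv) one has $\tfrac{d}{dt}\|T\|^2=T(\|T\|^2)=h(2\alpha T,T)=2\alpha\|T\|^2$, not $2\alpha$; with your value the terms in $c_0'$ do \emph{not} cancel (you are left with $2\alpha\varepsilon(1-\|T\|^2)$), while with the correct factor they cancel exactly.

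The two serious problems are elsewhere. First, your dichotomy is only pointwise: the quadratic \eqref{eqn:3.2} says $K_T$ has at most two eigenvalues on $\mathfrak{D}(p)$ at each point, but a priori the set $M_0\subset\widetilde M$ where two \emph{distinct} eigenvalues occur could be a proper nonempty open subset, with a single eigenvalue on the rest. The paper's proof is organized precisely around closing this gap: $M_0$ is obviously open, and the bulk of the proof (Claim 1) shows $M_0$ is also closed in the connected set $\widetilde M$, whence $M_0=\widetilde M$ or $M_0=\emptyset$. Your proposal never addresses this. Second, your argument inside the two-eigenvalue case rests on a false algebraic claim: two distinct roots of a $t$-dependent quadratic can perfectly well both vary with $t$ (e.g.\ $x^2-(2+t)x+(1+t)=0$ has roots $1$ and $1+t$), since the coefficients $\lambda(t)$ and $\varepsilon\|T\|^2(t)+\alpha'(t)$ are themselves functions of $t$; nothing in Vieta's formulas forces constancy. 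Consequently your route to $\alpha\equiv0$ and $\hat\nabla T=0$ collapses. What the paper actually does here is geometric, not algebraic: using the splitting $\mathfrak{D}=\mathfrak{D}_1\oplus\mathfrak{D}_2$, Lemma \ref{lem:3.2}(ii), and repeated play-off between covariant derivatives of $h(K_TV,V)$, $h(K_TV,W)$, $K(V,W)=0$ and the total symmetry \eqref{eqn:2.4}, one derives $(\mu_1-\mu_2)\alpha\|T\|^{-2}h(V,V)h(W,W)=0$, hence $\alpha=0$ on $M_0$ and $\hat\nabla T=0$; a further derivative computation (\eqref{eqn:3.16}--\eqref{eqn:3.23}) then gives $T(\mu_i)=0$, so the eigenvalues are constant and the distinctness persists on the closure of $M_0$. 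This chain of computations is the heart of the proposition, and your proposal contains no valid substitute for it.
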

\begin{proof}
As $M^n$ is connected and the zeros of $T$ is isolated, the subset $\widetilde{M}$ is also
connected. By Lemma \ref{lem:3.2}, $K_T$ on $\mathfrak{D}$ has at most two distinct
eigenvalues. We put
$$
M_0=\{p\in\widetilde{M}\,|\,K_T\ {\rm has\ two\ distinct\ eigenvalues\ on}\ \mathfrak{D}(p)\}.
$$

First of all, we assume that $M_0\not=\emptyset$. Obviously, $M_0$ is an open subset of $\widetilde{M}$.

\vskip 2mm

{\bf Claim 1}. {\it $M_0$ is a closed subset of $\widetilde{M}$}.

\vskip 2mm

Let $\{\lambda_i\}_{2\le i\le n}$ be the eigenvalues of $K_T$ on $\mathfrak{D}$ and assume that
$$
\mu_1:=\lambda_2=\cdots=\lambda_m<\lambda_{m+1}=\cdots=\lambda_n=:\mu_2.
$$

Since $\mu_1$ and $\mu_2$ are continuous functions on $M_0$, we see that $m$
is a constant on each connected component of $M_0$. So
$$
\mathfrak{D}_i=\{V\in \mathfrak{D}\mid K_TV=\mu_i V\},\ \ i=1,2,
$$
define two distributions on the connected components of $M_0$ and $\mathfrak{D}=\mathfrak{D}_1\oplus\mathfrak{D}_2$.

For $V\in\mathfrak{D}_1$ and $W\in\mathfrak{D}_2$, by direct calculations and using
Lemma \ref{lem:3.2}, we obtain
$$
\begin{aligned}
\hat{\nabla}_Wh(K_TV,V)=&h((\hat{\nabla}_WK)(T,V),V)+\alpha h(K_WV,V)+2h(K_TV,\hat{\nabla}_WV)\\
=&h((\hat{\nabla}_WK)(T,V),V)+2\mu_1h(V,\hat{\nabla}_WV),
\end{aligned}
$$
and
\begin{equation*}
\hat{\nabla}_W(\mu_1h(V,V))=2\mu_1h(V,\hat{\nabla}_WV).
\end{equation*}

Comparing the above equations and using $h(K_TV,V)=\mu_1h(V,V)$, we get
\begin{equation}\label{eqn:3.9}
h((\hat{\nabla}_WK)(T,V),V)=0,\ \ V\in\mathfrak{D}_1, W\in \mathfrak{D}_2.
\end{equation}

Similarly, for $V\in\mathfrak{D}_1$ and $W\in\mathfrak{D}_2$, taking the
derivative of $h(K_TV,W)=0$ with respect to $V\in\mathfrak{D}_1$ and using
Lemma \ref{lem:3.2}, we obtain
\begin{equation}\label{eqn:3.10}
h((\hat{\nabla}_VK)(T,V),W)=(\mu_1-\mu_2)h(\hat{\nabla}_VV,W),\ \ V\in \mathfrak{D}_1, W\in \mathfrak{D}_2.
\end{equation}

Comparing \eqref{eqn:3.9}, \eqref{eqn:3.10} and using \eqref{eqn:2.4}, we derive
\begin{equation}\label{eqn:3.11}
h(\hat{\nabla}_VV,W)=0,\ \ V\in \mathfrak{D}_1, W\in \mathfrak{D}_2.
\end{equation}

On the other hand, the fact $\mathcal{T}=\alpha\,{\rm id}$ implies that
\begin{equation}\label{eqn:3.12}
h(\hat{\nabla}_VV,T)=-h(\hat{\nabla}_VT,V)=-\alpha h(V,V),\ \ V\in \mathfrak{D}_1.
\end{equation}
Thus we get
\begin{equation}\label{eqn:3.13}
\hat{\nabla}_VV=(\hat{\nabla}_VV)^1-\alpha \|T\|^{-2}h(V,V)T,\ \ V\in \mathfrak{D}_1,
\end{equation}
where $(\hat{\nabla}_VV)^1$ denotes the component of $\hat{\nabla}_VV$ on $\mathfrak{D}_1$.

Again, for $V\in\mathfrak{D}_1$ and $W\in\mathfrak{D}_2$, taking the covariant derivative
of $K(V,W)=0$ with respect to $V$, we obtain
\begin{equation*}
(\hat{\nabla}_VK)(V,W)+K(\hat{\nabla}_VV,W)+K({\nabla}_VW,V)=0.
\end{equation*}
Hence, we have
\begin{equation}\label{eqn:3.14}
h((\hat{\nabla}_VK)(V,W),W)+h(K(\hat{\nabla}_VV,K_WW)=0,\ \ V\in \mathfrak{D}_1, W\in \mathfrak{D}_2.
\end{equation}

Inserting \eqref{eqn:3.13} into \eqref{eqn:3.14}, we obtain
\begin{equation*}
h((\hat{\nabla}_VK)(V,W),W)=\alpha \|T\|^{-2}\mu_2h(V,V)h(W,W),\ \ V\in \mathfrak{D}_1, W\in \mathfrak{D}_2.
\end{equation*}
Then, the fact $h((\hat{\nabla}_WK)(W,V),V)=h((\hat{\nabla}_VK)(V,W),W)$ implies that
\begin{equation*}
h((\hat{\nabla}_VK)(V,W),W)=\alpha \|T\|^{-2}\mu_1h(V,V)h(W,W),\ \ V\in \mathfrak{D}_1, W\in \mathfrak{D}_2.
\end{equation*}

Comparing the above computations, we obtain,
\begin{equation*}
(\mu_1-\mu_2)\alpha \|T\|^{-2}h(V,V)h(W,W)=0,\ \ V\in \mathfrak{D}_1, W\in \mathfrak{D}_2.
\end{equation*}
It follows that $\alpha=0$ on $M_0$. Hence, we have
\begin{equation}\label{eqn:3.15}
\hat{\nabla}T=0,\ {\rm on}\ M_0.
\end{equation}

Taking $X=V\in\mathfrak{D}_1$ and $Y=W\in\mathfrak{D}_2$ in \eqref{eqn:3.6}, we get
\begin{equation}\label{eqn:3.16}
h((\hat{\nabla}_VK)(T,T),W)=0,\ \ V\in \mathfrak{D}_1, W\in \mathfrak{D}_2.
\end{equation}

For $V\in\mathfrak{D}_1$ and $W\in \mathfrak{D}_2$, taking the derivative of
$h(K_TV,W)=0$ with respect to $T$, we easily obtain
\begin{equation}\label{eqn:3.17}
h((\hat{\nabla}_VK)(T,T),W)+\mu_2h(\hat{\nabla}_TV,W)+\mu_1h(\hat{\nabla}_TW,V)=0.
\end{equation}

From \eqref{eqn:3.16} and \eqref{eqn:3.17}, we get
\begin{equation}\label{eqn:3.18}
(\mu_2-\mu_1)h(\hat{\nabla}_TV,W)=0,\ \ V\in \mathfrak{D}_1, W\in \mathfrak{D}_2.
\end{equation}

From \eqref{eqn:3.18} and noting that $h(\hat{\nabla}_TV,T)=-\alpha h(T,V)=0$,
we get
\begin{equation}\label{eqn:3.19}
\hat{\nabla}_TV\in \mathfrak{D}_1,\ \ V\in \mathfrak{D}_1.
\end{equation}

Then, for $V\in \mathfrak{D}_1$, taking the covariant derivative of $K_TV=\mu_1V$
with respect to $T$ and using \eqref{eqn:3.15}, we obtain
\begin{equation}\label{eqn:3.20}
(\hat{\nabla}_TK)(T,V)+K(\hat{\nabla}_TV,T)=T(\mu_1)V+\mu_1\hat{\nabla}_TV.
\end{equation}
Then, by \eqref{eqn:3.19} and \eqref{eqn:3.20}, we get
\begin{equation}\label{eqn:3.21}
h((\hat{\nabla}_TK)(T,V),V)=T(\mu_1)h(V,V).
\end{equation}

On the other hand, as $\alpha=0$ on $M_0$, from \eqref{eqn:3.5} and \eqref{eqn:3.15},
we have
\begin{equation}\label{eqn:3.22}
h((\hat{\nabla}_TK)(T,V),V)=0.
\end{equation}
Then \eqref{eqn:3.21} and \eqref{eqn:3.22} imply that
\begin{equation}\label{eqn:3.23}
T(\mu_1)=0\ {\rm on}\ M_0.
\end{equation}
This and (iii) of Lemma \ref{lem:3.2} show that $\mu_1$ is a constant on the
component of $M_0$.

Similarly, we can prove that $\mu_2$ is constant on the component of $M_0$.
So, by continuity of $\mu_1$ and $\mu_2$, $K_T$ restricted on $\mathfrak{D}$
has two distinct eigenvalues on the closure of $M_0$. Thus, $M_0$ is a
closed subset of $\widetilde{M}$ and Claim 1 is verified.

In summary, we have proved that there are only two possibilities:
Either $M_0=\widetilde{M}$ or $M_0=\emptyset$.

If $\widetilde{M}=M_0$, then by continuity $\hat{\nabla}T=0$ and $h(T,T)$ is
constant on $M$. Since by assumption $T\neq0$, we have proved that $T$ has no
zeros and $\widetilde{M}=M$. Thus, case (i) in Proposition \ref{prop:3.1} occurs.

If $M_0=\emptyset$, $K_T$ restricted on $\mathfrak{D}$ has only one eigenvalue,
denoted by $\mu$, which is constant on the leaves of $\mathfrak{D}$.
Hence $K_TV=\mu V$ for any $V\in\mathfrak{D}$.

Taking the derivative of $K_TV=\mu V$ with respect to $T$ and noting that
$\hat{\nabla}_TV\in\mathfrak{D}$, we obtain
\begin{equation}\label{eqn:3.24}
(\hat{\nabla}_TK)(T,V)+\alpha\mu V=\mu' V,\ \ V\in\mathfrak{D}.
\end{equation}

On the other hand, from \eqref{eqn:3.5}, we obtain
\begin{equation}\label{eqn:3.25}
(\hat{\nabla}_TK)(T,V)=(\lambda-2\mu)\alpha V,\ \ V\in\mathfrak{D}.
\end{equation}

From \eqref{eqn:3.24} and \eqref{eqn:3.25}, we get the first equation in \eqref{eqn:3.8}.

For any $p\in\widetilde{M}$, let $\{e_1=\|T\|^{-1}T,e_2,\ldots,e_n\}$ be an orthonormal
basis of $T_pM^n$. Then, by definition \eqref{eqn:2.5} we can derive that $T=\tfrac1n\sum_{i=1}^nK_{e_i}e_i$.
It follows that
\begin{equation}\label{eqn:3.26}
T=\frac{1}{n}\Big(\|T\|^{-2}K_TT+\sum_{i=2}^nK_{e_i}e_i\Big).
\end{equation}

Taking the inner product of both sides of \eqref{eqn:3.26} with $T$,
we get immediately the second equation in \eqref{eqn:3.8}.

By using \eqref{eqn:3.2}, the first two equations in \eqref{eqn:3.1} and
the first equation in \eqref{eqn:3.8}, direct calculations show that
$$
\hat{\nabla}(\varepsilon\|T\|^2-\mu^2+\alpha^2)=2\|T\|^{-2}\alpha(\mu^2-\lambda\mu
+\varepsilon\|T\|^2+\alpha')T=0.
$$
It follows that $\varepsilon\|T\|^2-\mu^2+\alpha^2=:c_0$ is a constant on $\widetilde{M}$.

We have proved that if $M_0=\emptyset$ then case (ii) in Proposition \ref{prop:3.1} occurs.
\end{proof}

As a crucial step to complete the proof of Theorem \ref{thm:1.2}, we intend to derive
a locally expression for centroaffine Tchebychev hypersurfaces which are assumed to
satisfy property (ii) of Proposition \ref{prop:3.1}. To achieve the purpose, we first
state the following lemma, whose proof is an easy computation.

\begin{lemma}\label{lem:3.3}
Assume that $\lambda(t), \alpha(t)$ and $\mu(t)$ are real-valued functions satisfying
$$
\alpha'=-\mu^2+\lambda \mu-\varepsilon\|T\|^2,\ \ \mu'=(\lambda-\mu)\alpha.
$$
Then the ordinary differential equation
$$
x_{tt}=(\lambda+\alpha) x_t-\varepsilon|T|^2x
$$
has two linear independent solutions that can be written as follows:
$$
x_1=e^{\int (\alpha+\mu) dt},\ \ x_2=x_1\int e^{\int(\lambda-2\mu-\alpha dt)}dt.
$$
\end{lemma}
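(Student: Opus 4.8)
The plan is to treat this purely as a standard second-order linear homogeneous ODE and verify the two claimed functions, since the hypotheses on $\alpha,\mu$ are exactly what collapse the relevant exponents. Rewriting the equation as $x_{tt}-(\lambda+\alpha)x_t+\varepsilon\|T\|^2x=0$, I would first substitute $x_1=e^{\int(\alpha+\mu)\,dt}$. Setting $\phi:=\int(\alpha+\mu)\,dt$ so that $\phi'=\alpha+\mu$, differentiation gives $x_1'=(\alpha+\mu)x_1$ and $x_1''=\big[(\alpha'+\mu')+(\alpha+\mu)^2\big]x_1$. Inserting these reduces the claim that $x_1$ solves the ODE to the single pointwise identity
$$
(\alpha'+\mu')+(\alpha+\mu)^2=(\lambda+\alpha)(\alpha+\mu)-\varepsilon\|T\|^2.
$$
Substituting the two hypotheses $\alpha'=-\mu^2+\lambda\mu-\varepsilon\|T\|^2$ and $\mu'=(\lambda-\mu)\alpha$ into the left side, both sides simplify to $\lambda\alpha+\lambda\mu+\alpha\mu+\alpha^2-\varepsilon\|T\|^2$, so $x_1$ is a solution.

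For the second solution I would avoid differentiating an integral and instead invoke reduction of order. For an equation $x''+p\,x'+q\,x=0$ with a known solution $x_1$, a second solution is $x_2=x_1\int x_1^{-2}e^{-\int p\,dt}\,dt$. Here $p=-(\lambda+\alpha)$, so $e^{-\int p\,dt}=e^{\int(\lambda+\alpha)\,dt}$ while $x_1^{-2}=e^{-2\int(\alpha+\mu)\,dt}$; combining the exponents via $(\lambda+\alpha)-2(\alpha+\mu)=\lambda-2\mu-\alpha$ collapses the integrand to $e^{\int(\lambda-2\mu-\alpha)\,dt}$, which is precisely the stated $x_2$. This route simultaneously produces the formula and certifies that it solves the ODE, so no direct second-derivative computation for $x_2$ is needed.

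For linear independence I would write $x_2=x_1 u$ with $u:=\int e^{\int(\lambda-2\mu-\alpha)\,dt}\,dt$ and compute the Wronskian $W=x_1x_2'-x_1'x_2=x_1^2u'=x_1^2\,e^{\int(\lambda-2\mu-\alpha)\,dt}$, a product of exponentials and therefore nowhere vanishing; hence $x_1$ and $x_2$ are linearly independent.

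There is essentially no genuine obstacle here, which is consistent with the statement that the proof is an easy computation. The only points requiring care are keeping the sign of $p$ correct when applying the reduction-of-order formula and bookkeeping the exponents so that $\lambda-2\mu-\alpha$ emerges cleanly. I also note that the displayed $x_2$ in the statement contains a misplaced $dt$ inside the inner exponent; the intended integrand is $e^{\int(\lambda-2\mu-\alpha)\,dt}$, as the reduction-of-order derivation confirms.
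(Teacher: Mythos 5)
Your proof is correct, and it is exactly the ``easy computation'' the paper alludes to while omitting the argument: direct substitution using the two hypotheses to verify $x_1$, reduction of order to produce $x_2$, and a nonvanishing Wronskian $x_1^2e^{\int(\lambda-2\mu-\alpha)\,dt}$ for independence. Your observation about the misplaced $dt$ in the stated $x_2$ is also right; the intended integrand $e^{\int(\lambda-2\mu-\alpha)\,dt}$ is confirmed by the formula for $\gamma_2$ in \eqref{eqn:3.28}.
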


Finally, as one main result of this section, we can prove the following

\begin{proposition}\label{prop:3.2}
Let $x: M^n\rightarrow\mathbb{R}^{n+1}$ be a centroaffine Tchebychev hypersurface
with $\mathcal{T}=\alpha\,{\rm id}$ such that $K_TT=\lambda T$ and case (ii) in
Proposition \ref{prop:3.1} occurs. Then $x$ can be written as
\begin{equation}\label{eqn:3.27}
x=\gamma_1(t)\varphi+\gamma_2(t)C,
\end{equation}
where $\varphi:N\rightarrow\mathbb{R}^{n}$ is an affine hypersphere, $C$ is a
nonzero constant vector in $\mathbb{R}^{n+1}$, and
\begin{equation}\label{eqn:3.28}
\gamma_1(t)=e^{\int (\alpha+\mu) dt},\ \gamma_2(t)=e^{\int(\alpha+\mu)dt}\int
e^{\int(\lambda-2\mu-\alpha) dt}dt.
\end{equation}

Moreover, the difference tensor $K$ of $x: M^n\rightarrow\mathbb{R}^{n+1}$ and the
difference tensor $K'$ of $\varphi:N\rightarrow\mathbb{R}^{n}$ are related by
\begin{equation}\label{eqn:3.29}
K(X,Y)=\|T\|^{-2}h(X,Y)\mu T+K'(X,Y),\ X,Y\in\mathfrak{D}.
\end{equation}
\end{proposition}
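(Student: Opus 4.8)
The plan is to decode the conclusion of case (ii) as a warped-product-type ansatz and then verify that the splitting $x=\gamma_1\varphi+\gamma_2 C$ is forced by the structure equations. By Lemma \ref{lem:3.1}(iv), on $\widetilde M$ the metric $g=\|T\|^{-2}h$ splits isometrically as $(I\times N, dt^2\oplus g')$ with $T=\partial/\partial t$, so I may work with coordinate $t$ along the integral curves of $T$ and with the leaf $N$ of the foliation $\mathfrak D$. First I would fix a leaf $N=\{t_0\}\times N$ and let $\varphi$ be the restriction of the immersion $x$ to $N$, suitably normalized; the goal is to show that along the $t$-direction the position vector $x$ evolves as a second-order linear ODE in $t$ whose coefficients are exactly those of Lemma \ref{lem:3.3}. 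Concretely, I would compute $D_T x_*(T)=x_*(\nabla_T T)+h(T,T)(-\varepsilon x)$ from the Gauss formula \eqref{eqn:2.1}, rewrite $\nabla_T T=\hat\nabla_T T+K_T T$, use $\hat\nabla_T T=\alpha T$ (from $\mathcal T=\alpha\,{\rm id}$) together with $K_T T=\lambda T$, and thereby express $D_T D_T x$ as a combination of $x$ and $x_*(T)=D_T x$. This should yield $x_{tt}=(\lambda+\alpha)x_t-\varepsilon\|T\|^2 x$, matching Lemma \ref{lem:3.3} verbatim.

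With the ODE in hand, Lemma \ref{lem:3.3} gives the two independent solutions $\gamma_1,\gamma_2$ of \eqref{eqn:3.28}, so the general solution is $x(t,\cdot)=\gamma_1(t)A(\cdot)+\gamma_2(t)B(\cdot)$ for vector-valued ``constants of integration'' $A,B$ depending only on the leaf variable. The next step is to identify these: differentiating in leaf directions $V\in\mathfrak D$ and using case (ii)'s relation $K_TV=\mu V$ (so that the mixed covariant derivatives decouple the $t$- and $N$-dependence), I expect to show that one of the two vector-valued coefficients is in fact a single constant vector $C\in\mathbb R^{n+1}$, independent of the point of $N$, while the other traces out the hypersurface $\varphi:N\to\mathbb R^{n}$. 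The constancy of $C$ should follow from showing its derivative along $\mathfrak D$ vanishes, which uses $\mu'=(\lambda-\mu)\alpha$ and $n\|T\|^2=\lambda+(n-1)\mu$ from \eqref{eqn:3.8}; that $\varphi$ lands in a hyperplane $\mathbb R^{n}$ (transversal to $C$) is where the constant $c_0=\varepsilon\|T\|^2-\mu^2+\alpha^2$ enters, pinning down the relative normalization so that $\varphi$ is itself a centroaffine/affine hypersphere.

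To establish that $\varphi$ is an affine hypersphere and to derive the difference-tensor relation \eqref{eqn:3.29}, I would decompose the Gauss equation \eqref{eqn:2.1} of $x$ restricted to $V,W\in\mathfrak D$. Writing $K(V,W)=\|T\|^{-2}h(K(V,W),T)\,T+K(V,W)^{\mathfrak D}$ and computing the tangential component $h(K(V,W),T)=h(K_TV,W)=\mu\,h(V,W)$ directly gives the $\mu T$-term in \eqref{eqn:3.29}; the remaining $\mathfrak D$-valued part must be reinterpreted as the difference tensor $K'$ of $\varphi$ in its own Gauss formula, which forces $\varphi$ to have the structure equations of an affine hypersphere (constant affine mean curvature, shape operator a multiple of identity). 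The main obstacle, I expect, is precisely this last identification: one must carefully track how the induced connection and the centroaffine metric of the ambient $x$ restrict to the leaf, verify that the warping by $\gamma_1$ rescales $\varphi$ into the correct equiaffine normalization, and check that the transversal vector of $\varphi$ coincides (up to the scaling $\gamma_1,\gamma_2$) with the centroaffine position vector of $x$ minus its $C$-component. Handling the bookkeeping of these two normalizations consistently—so that $\varphi$ emerges as a genuine affine hypersphere rather than merely a relative one—is the delicate step; the constancy relation for $c_0$ is the algebraic fact that makes it go through.
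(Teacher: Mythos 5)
Your plan reproduces the paper's argument almost step for step up to the final claim: the splitting from Lemma \ref{lem:3.1}(iv), the ODE $x_{tt}=(\lambda+\alpha)x_t-\varepsilon\|T\|^2x$ obtained from the Gauss formula \eqref{eqn:2.1} with $\nabla_TT=\hat\nabla_TT+K_TT=(\alpha+\lambda)T$, the mixed equation $x_{tu_i}=(\mu+\alpha)x_{u_i}$ (from $K_TV=\mu V$ and $\mathcal{T}=\alpha\,{\rm id}$) which forces the second vector-valued integration ``constant'' to be a genuine constant vector $C$, and the $T$-versus-$\mathfrak{D}$ decomposition $h(K(V,W),T)=h(K_TV,W)=\mu h(V,W)$ that produces the $\mu T$-term of \eqref{eqn:3.29}. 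All of this is correct and coincides with the paper's \eqref{eqn:3.30}--\eqref{eqn:3.34}.

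The genuine gap is in the step you yourself flag as delicate, and your proposed mechanism for closing it is the wrong one. Establishing that $\varphi$ satisfies a Gauss formula with transversal field $C$ (when $c_0=0$) or $c_0(\varphi+C)$ (when $c_0\neq0$) only exhibits $\varphi$ as a \emph{relative} hypersphere, and by itself that is no information: as the paper stresses in its introduction, \emph{every} nondegenerate hypersurface is a relative hypersphere with respect to a suitable (e.g.\ centroaffine) normalization. To conclude that $\varphi$ is an \emph{affine} hypersphere one must show that this particular relative normalization is the Blaschke normalization, i.e.\ verify the apolarity condition ${\rm Tr}\,K'_X=0$ for all $X\in TN$. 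The paper does exactly this in \eqref{eqn:3.40}: after checking via Koszul's formula that the Levi-Civita connection of $g'$ is the tangential part of $\hat\nabla$, so that $K'=K^T$, the block structure of $h$ (namely $h_{1i}=0$, $h^{11}=\|T\|^{-2}$) and the definition \eqref{eqn:2.5} of $T$ as the normalized trace of $K$ give
\begin{equation*}
{\rm Tr}\,K'_X=\sum_{i,j\ge2}h^{ij}h(K_X\partial u_i,\partial u_j)
=n\,h(T,X)-\|T\|^{-2}h(K_XT,T)=0,\qquad X\in\mathfrak{D},
\end{equation*}
where the last equality uses $h(K_XT,T)=h(K_TT,X)=\lambda h(T,X)=0$. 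Your proposal instead attributes the resolution to ``the constancy relation for $c_0$,'' but $c_0$ being constant serves a different purpose: it makes the coefficient of $g'$ in the structure equation for $\varphi_{u_iu_j}$ independent of $t$, so that the relative normal of the $t$-independent map $\varphi$ is well defined at all. It does not yield apolarity. Without the trace computation the statement ``$\varphi$ is an affine hypersphere'' does not follow, and the later appeal to the Blaschke--Deicke theorem in Section \ref{sect:4} would be unjustified.
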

\begin{proof}
From item (iv) of Lemma \ref{lem:3.1}, the Riemannian manifold $(M^n,h)$ is locally
isometric to $(I\times N,\|T\|^2(dt^2+g'))$, where $N$ is the integral manifold of
$\mathfrak{D}$ and $g'$ is a metric defined on $N$. Therefore, we can take a local
coordinate $(u_1:=t,u_2,\ldots,u_n)$ of $M^n=I\times N$ so that the metric $h$ has
the following expression
$$
h=\|T\|^2\Big(dt^2+\sum_{i=2}^ng'_{ij}du_idu_j\Big).
$$

By using \eqref{eqn:2.1}, and case (ii) of Proposition \ref{prop:3.1}, we have
\begin{equation}\label{eqn:3.30}
x_{tt}=(\lambda+\alpha)x_t-\varepsilon\|T\|^2x,
\end{equation}
\begin{equation}\label{eqn:3.31}
x_{tu_i}=(\mu+\alpha)x_{u_i},\ \ i\geq 2.
\end{equation}

In our case, by \eqref{eqn:3.2}, \eqref{eqn:3.8} and Lemma \ref{lem:3.3},
we can solve \eqref{eqn:3.30} to obtain
\begin{equation}\label{eqn:3.32}
x=\gamma_1(t)\varphi(u_2,\ldots,u_n)+\gamma_2(t)\psi(u_2,\ldots,u_n).
\end{equation}
where $\varphi(u_2,\ldots,u_n)$ and $\psi(u_2,\ldots,u_n)$ are
$\mathbb{R}^{n+1}$-valued functions, $\gamma_1(t)$ and $\gamma_2(t)$ are
described by \eqref{eqn:3.28}.

Then, substituting \eqref{eqn:3.32} into \eqref{eqn:3.31}, we further derive
$\tfrac{\partial \psi}{\partial u_k}=0$ for $2\le k\le n$. This implies that
$\psi(u_2,\ldots,u_n)=:C$ is a constant vector in $\mathbb{R}^{n+1}$. Due to
that $x: M^n\rightarrow\mathbb{R}^{n+1}$ is a nondegenerate centroaffine
hypersurfaces, $C$ must be nonzero (if otherwise, we have $x_t=\gamma_1^{-1}\gamma_1'x$,
contradicting to that $x$ is a transversal vector). Now, we have
\begin{equation}\label{eqn:3.33}
x=\gamma_1(t)\varphi(u_2,\ldots,u_n)+\gamma_2(t)C.
\end{equation}
It follows that
\begin{equation}\label{eqn:3.34}
\varphi_{u_i}=\gamma_1(t)^{-1}x_{u_i},\ \ i\geq2.
\end{equation}
Thus, $\varphi$ defines an immersion from $N$ into $\mathbb{R}^{n+1}$.

\vskip 1mm

{\bf Claim 2}. $\varphi:N\to\mathbb{R}^{n+1}$ defines a locally strongly convex affine
hypersphere in an $n$-dimensional vector subspace $\mathbb{R}^{n}$ of $\mathbb{R}^{n+1}$.

\vskip 1mm

To verify Claim 2, noticing that $\|T\|^{-2}h(\partial u_i,\partial u_j)=g'(\partial u_i,\partial u_j)$
for $i,j\geq2$, and from \eqref{eqn:3.28} we can derive $x_t=\gamma_1(\alpha+\mu)\varphi+\gamma_2(\alpha+\mu)C
+\gamma_1e^{\int(\lambda-2\mu-\alpha) dt}C$. Then straightforward calculations by using \eqref{eqn:3.33}
and \eqref{eqn:3.34} give that
\begin{align}\label{eqn:3.35}
\begin{split}
\varphi_{u_iu_j}&=\gamma_1(t)^{-1}x_{u_iu_j}=\gamma_1(t)^{-1}(x_*(\nabla_{\partial u_i}\partial u_j)
-\varepsilon h(\partial u_i,\partial u_j)x)\\
&=\gamma_1(t)^{-1}\Big(x_*(\nabla^T_{\partial u_i}\partial u_j)
 +h(\hat{\nabla}_{\partial u_i}\partial u_j,\|T\|^{-1}T)\,\|T\|^{-1}x_t\\
&\qquad\qquad\ \ +h(K_{\partial u_i}\partial u_j,\|T\|^{-1}T)\,\|T\|^{-1}x_t-\varepsilon h(\partial u_i,\partial u_j)x\Big)\\
&=\gamma_1(t)^{-1}\Big(x_*(\nabla^T_{\partial u_i}\partial u_j)\Big)
+\Big[c_0\varphi+\Big(c_0\int
e^{\int(\lambda-2\mu-\alpha) dt}dt\\
&\qquad\qquad\ \ \ +(\mu-\alpha)e^{\int(\lambda-2\mu-\alpha) dt}\Big)C\Big]\|T\|^{-2}h(\partial u_i,\partial u_j)\\
&=\varphi_*(\nabla^T_{\partial u_i}\partial u_j)
+\Big[c_0\varphi+\Big(c_0\int e^{\int(\lambda-2\mu-\alpha) dt}dt\\
&\qquad\qquad\ \ \ +(\mu-\alpha)e^{\int(\lambda-2\mu-\alpha) dt}\Big)C\Big]g'(\partial u_i,\partial u_j),\ \ i,j\geq2,
\end{split}
\end{align}
where $c_0:=\varepsilon \|T\|^2-\mu^2+\alpha^2$ is a constant as described in
Proposition \ref{prop:3.1}, and $\nabla^T_{\partial u_i}\partial u_j$ denotes
the tangent component of $\nabla_{\partial u_i}\partial u_j$ in $TN$ .

\vskip 1mm

Now, we consider two possibilities:

\vskip 1mm

\textbf{Case I.} $c_0=0$, i.e. $\varepsilon \|T\|^2-\mu^2+\alpha^2=0$.

By direct calculations, we can show that $(\mu-\alpha)e^{\int(\lambda-2\mu-\alpha)dt}$
is a constant. If $(\mu-\alpha)e^{\int(\lambda-2\mu-\alpha) dt}=0$, then
$\mu-\alpha=0$ which, together with $c_0=0$, implies that $T=0$.
This contradict to the assumption $T\neq0$. Hence $(\mu-\alpha)e^{\int(\lambda-2\mu-\alpha)dt}\neq0$.
Then, denoting the non-zero constant vector $(\mu-\alpha)e^{\int(\lambda-2\mu-\alpha) dt}C$ still by $C$,
we get the expression
\begin{equation}\label{eqn:3.36}
\varphi_{u_iu_j}=\varphi_*(\nabla^T_{\partial u_i}\partial u_j)
+g'(\partial u_i,\partial u_j)C,\ \ i,j\geq2.
\end{equation}

\textbf{Case II.} $c_0\neq0$. In this case, we have
\begin{align}\label{eqn:3.37}
\begin{split}
\varphi_{u_iu_j}=&\gamma_1(t)^{-1}\Big(x_*(\nabla^T_{\partial u_i}\partial u_j)\Big)
+c_0\Big[\varphi+\Big(\int e^{\int(\lambda-2\mu-\alpha) dt}dt\\
&\ \ \ +c_0^{-1}(\mu-\alpha)e^{\int(\lambda-2\mu-\alpha) dt}\Big)C\Big]g'(\partial u_i,\partial u_j),\ \ i,j\geq2.
\end{split}
\end{align}

Moreover, direct calculations show that the term
$$
\int e^{\int(\lambda-2\mu-\alpha) dt}dt+c_0^{-1}(\mu-\alpha)e^{\int(\lambda-2\mu-\alpha)dt}
$$
is a constant. Then, denoting $(\int e^{\int(\lambda-2\mu-\alpha) dt}dt+c_0^{-1}(\mu-\alpha)e^{\int(\lambda-2\mu-\alpha)dt})C$
still by $C$, we obtain
\begin{align}\label{eqn:3.38}
\begin{split}
\varphi_{u_iu_j}=&\varphi_*(\nabla^T_{\partial u_i}\partial u_j)
+c_0(\varphi+C)g'(\partial u_i,\partial u_j),\ \ i,j\geq2.
\end{split}
\end{align}

Therefore, for both cases, \eqref{eqn:3.36} (resp. \eqref{eqn:3.38}) implies that in Case I
(resp. Case II) the image of $\varphi$ is contained in an $n$-dimensional linear subspace
$\mathbb{R}^{n}$ of $\mathbb{R}^{n+1}$, and the immersion $\varphi:N\rightarrow\mathbb{R}^{n}$
can be interpreted as a relative hypersphere with respect to the relative normal
vector field $C$ (resp. $c_0(\varphi+C)$), with induced connection $\nabla^T$ and
relative metric $g'$ (which by definition is definite), respectively.

Denote by $\hat\nabla'$ the Levi-Civita connection of $g'$. Using $h=\|T\|^2(dt^2+g')$
and the Koszul's formula, we have the calculation
$$
\begin{aligned}
\|T\|^{-2}h(\hat\nabla'_{\partial u_i}\partial u_j,\partial u_k)
&=g'(\hat\nabla'_{\partial u_i}\partial u_j,\partial u_k)\\
&=\|T\|^{-2}h(\hat{\nabla}_{\partial u_i}\partial u_j,\partial u_k),\ \ i,j\geq2.
\end{aligned}
$$
This shows that $\hat\nabla'_{\partial u_i}\partial u_j=\hat{\nabla}^T_{\partial u_i}\partial u_j$
for $i,j\geq2$, and therefore, the difference tensor $K'$ of $\varphi:N\rightarrow\mathbb{R}^{n}$
is given by
\begin{equation}\label{eqn:3.39}
K'_{\partial u_i}\partial u_j=K^T_{\partial u_i}\partial u_j,\ \ i,j\geq2,
\end{equation}
where $\hat{\nabla}^T_{\partial u_i}\partial u_j$ and $K^T_{\partial u_i}\partial u_j$
denote the tangent parts of $\hat{\nabla}_{\partial u_i}\partial u_j$ and
$K_{\partial u_i}\partial u_j$ in $TN$, respectively.

Let $((g')^{ij})$ (resp. $(h^{AB})$) denote the inverse matrix of $(g'_{ij})$
(resp. $(h_{AB})$), and $h_{AB}=h(\partial u_A,\partial u_B)$ for $A,B\ge1$.
Then, by using \eqref{eqn:3.39}, for any $X\in TN$, we can get the calculation that
\begin{align}\label{eqn:3.40}
\begin{split}
{\rm Tr}\, K'_X&=\sum_{i,j=2}^n(g')^{ij}g'(K'_{X}\partial u_i,\partial u_j)\\
&=\sum_{i,j=2}^nh^{ij}h(K_{X}\partial u_i,\partial u_j)\\
&=nh(T,X)-h^{11}h(K_{X}T,T)\\
&=0.
\end{split}
\end{align}
This verifies Claim 2 that $\varphi:N\rightarrow \mathbb{R}^{n}$ is actually an affine hypersphere.

Finally, by \eqref{eqn:3.39} and the definition of the difference tensor, \eqref{eqn:3.29}
immediately follows.
\end{proof}

\section{The Completion of Theorem \ref{thm:1.2}'s Proof}\label{sect:4}
We first show that for a centroaffine Tchebychev hyperovaloid, the Tchebychev
vector field $T$ and the difference tensor field $K$ satisfy an important
relation.
\begin{lemma}\label{lm:4.1}
Let $x: M^n\rightarrow\mathbb{R}^{n+1}$ be a centroaffine Tchebychev
hyperovaloid. Then its Tchebychev vector field $T$ and difference tensor
field $K$ satisfy the relation
\begin{equation}\label{eqn:4.1}
K_TT=\tfrac{3n}{n+2}\|T\|^2T.
\end{equation}
\end{lemma}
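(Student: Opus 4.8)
The key structural feature of a hyperovaloid is compactness, and the plan is to exploit this by integrating a cleverly chosen divergence and applying the divergence theorem. The identity \eqref{eqn:4.1} is a pointwise relation, but on a compact manifold pointwise relations of this type typically follow from an integral inequality that is forced to be an equality. So I would first seek a global integral identity whose integrand is a perfect divergence, and then extract \eqref{eqn:4.1} as the equality case.

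Concretely, the natural candidate to integrate is the divergence of a vector field built from $T$ and $K_TT$, or equivalently to compute $\int_{M^n}\operatorname{div}(\text{something})\,dV_h=0$. The most promising starting point is to apply the apolarity/trace identity $T=\tfrac1n\sum_i K_{e_i}e_i$ from \eqref{eqn:3.26} together with the totally symmetric Codazzi-type equation \eqref{eqn:2.4} and the self-adjointness \eqref{eqn:2.9} of $\mathcal{T}$. I expect that differentiating the function $\|T\|^2$ (whose gradient involves $\mathcal{T}T=\alpha T$, recovering the first relation in \eqref{eqn:3.1}) and combining it with the definition \eqref{eqn:2.6} of the traceless difference tensor $\tilde K$ will produce, after taking traces, an expression in which $K_TT$, $\|T\|^2 T$, and a divergence term appear linearly. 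The coefficient $\tfrac{3n}{n+2}$ strongly suggests that the decomposition \eqref{eqn:2.6} is doing the essential work: writing $K_TT$ via $\tilde K(T,T)+\tfrac{n}{n+2}\big[\|T\|^2 T+2h(T,T)T\big]=\tilde K(T,T)+\tfrac{3n}{n+2}\|T\|^2 T$, so that \eqref{eqn:4.1} is precisely the assertion that $\tilde K(T,T)$ has no component along $T$, i.e. $h(\tilde K(T,T),T)=0$ pointwise, after which one must also rule out the transverse components.

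Thus the cleanest route is to show $\tilde K(T,T)=\beta\,T$ for some function $\beta$ and then that $\beta=0$ by integration. The plan is: (1) reduce \eqref{eqn:4.1} to a statement about $\tilde K(T,T)$ using \eqref{eqn:2.6}; (2) use \eqref{eqn:2.4}, \eqref{eqn:2.9} and the Tchebychev condition $\mathcal{T}=\alpha\,\mathrm{id}$ to write the relevant derivative of $h(\tilde K(T,T),T)$ or of $\|\tilde K(T,\cdot)\|^2$ as a divergence plus a sign-definite term; (3) integrate over the compact $M^n$, forcing the sign-definite term to vanish, and conclude. The identity $\hat\nabla\|T\|^2=2\alpha T$ and the curvature relations \eqref{eqn:3.1} should let me convert the boundary-free integral into a constraint that only the stated multiple of $\|T\|^2 T$ survives.

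The main obstacle I anticipate is step (2): identifying the precise combination that is simultaneously a divergence and sign-definite. Unlike the affine-hypersphere case, here $\alpha$ is not constant (as stressed in Remark \ref{rem:1.1}), so naive integration by parts will leave uncontrolled terms involving $\hat\nabla\alpha$ and $T(\alpha)$; these must be absorbed using the second and third relations of \eqref{eqn:3.1}, which tie $\hat\nabla\alpha$ and $\hat R(\cdot,\cdot)T$ back to $T$ itself. Handling these $\alpha$-dependent terms correctly, and ensuring the residual quadratic form in $\tilde K(T,T)$ (or in the eigenvalues $\lambda_i$ of $K_T$ on $\mathfrak{D}$) is genuinely definite rather than merely semidefinite, is where the real work lies; the compactness and strong convexity of the hyperovaloid must enter decisively here to guarantee the equality case gives exactly \eqref{eqn:4.1}.
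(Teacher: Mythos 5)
Your reduction is exactly right: by \eqref{eqn:2.6}, the claimed identity \eqref{eqn:4.1} is precisely the statement $\tilde K(T,T)=0$, and this is the vector field the paper works with (it sets $Z:=K_TT-\tfrac{3n}{n+2}\|T\|^2T$). You also correctly sense that compactness of the hyperovaloid must be used globally. However, your plan for the actual proof --- express a derivative of $h(\tilde K(T,T),T)$ or $\|\tilde K(T,\cdot)\|^2$ as a divergence plus a \emph{sign-definite} term, integrate, and analyze the equality case --- is left entirely unexecuted (you yourself flag step (2) as ``where the real work lies''), and it is not the mechanism that works. There is no natural sign available here: $\alpha$ changes sign on a hyperovaloid, the curvature terms in \eqref{eqn:3.1} are not controlled, and nothing forces the residual quadratic form you describe to be definite. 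This is a genuine gap, not a technical detail.

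The idea you are missing is that the argument is linear and topological, not variational. Compute, using \eqref{eqn:3.5} (equivalently the Codazzi equation) and $\mathcal{T}=\tfrac1n(\operatorname{div}T)\,\mathrm{id}$,
\begin{equation*}
h(\hat{\nabla}_XZ,Y)=h((\hat{\nabla}_XK)(T,T),Y)+\tfrac2n(\operatorname{div}T)\,h(K_XY,T)
-\tfrac3{n+2}(\operatorname{div}T)\bigl(2h(X,T)h(Y,T)+h(T,T)h(X,Y)\bigr).
\end{equation*}
Each term on the right is symmetric in $X,Y$ --- the first by the total symmetry \eqref{eqn:2.4} --- so the $1$-form dual to $Z$ is closed; and taking the trace over $X=Y$, the apolarity relation $\operatorname{Tr}K_X=nT^\sharp(X)$ (i.e.\ \eqref{eqn:2.5}, which gives $\sum_i(\hat\nabla_TK)(e_i,e_i)=n\hat\nabla_TT=(\operatorname{div}T)T$) makes the three traces cancel, so $\operatorname{div}Z=0$. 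Thus $Z$ is a harmonic vector field. Since a hyperovaloid $M^n$ is diffeomorphic to $S^n$ and $H^1(S^n)=0$ for $n\ge2$, the closed form $Z^\sharp$ is exact, $Z^\sharp=df$, and $\Delta f=\operatorname{div}Z=0$ forces $f$ to be constant; hence $Z\equiv0$. No inequality, no Bochner term, no equality-case analysis is needed, and the $\hat\nabla\alpha$, $T(\alpha)$ terms you were worried about never appear.
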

\begin{proof}
Put $Z:=K_TT-\frac{3n}{n+2}\|T\|^2T$. By a direct calculation, we obtain
\begin{equation}\label{eqn:4.2}
\begin{aligned}
h(\hat{\nabla}_XZ,Y)=&h((\hat{\nabla}_XK)(T,T),Y)+\tfrac2n({\rm div}\,T)h(K_XY,T)\\&-
\tfrac3{n+2}({\rm div}\,T)\big(2h(X,T)h(Y,T)+h(T,T)h(X,Y)\big).
\end{aligned}
\end{equation}
From \eqref{eqn:4.2}, it is easily seen that $Z$ is a closed vector field which
satisfies ${\rm div} Z=0$. So $Z$ is a harmonic vector field on $M^n$. Notice
that $M^n$ is diffeomorphic to a sphere, whereas on the sphere there are no
nontrivial harmonic vector fields. Hence, $Z$ vanishes identically, and we get
\eqref{eqn:4.1}.
\end{proof}

It is well known that the centroaffine Tchebychev form $T^\sharp$ can be expressed by the
equiaffine support function $\rho$ (cf. \cite{LSW,STV}):
\begin{equation}\label{eqn:4.3}
T^\sharp=\tfrac{n+2}{2n}d\ln\rho.
\end{equation}

Put $f:=\frac{n+2}{2n}\ln\rho$. Then by \eqref{eqn:4.3}, we can write $T=\hat{\nabla}f$.
It follows that
\begin{equation}\label{eqn:4.4}
{\rm Hess} f(X,Y)=\tfrac1n({\rm div}\,T)h(X,Y).
\end{equation}

If $T\equiv0$, then, as a centroaffine hypersurface, $x$ is an affine hypersphere
centered at the origin $O\in\mathbb{R}^{n+1}$. By the theorem of Blaschke and Deicke,
$x: M^n\rightarrow\mathbb{R}^{n+1}$ is an ellipsoid centered at the origin of
$\mathbb{R}^{n+1}$.

Next, we assume that $T\neq 0$. Then, since $M^n$ is compact and is diffeomorphic
to a sphere, according to \cite{T} and \eqref{eqn:4.4} we know that $(M^n,h)$ is
conformally equivalent to a round sphere and the number of isolated zeros of $T$
is $2$ (cf. also Theorem 4.6 of \cite{LSW}).
On the other hand, from Lemma \ref{lm:4.1} and Proposition \ref{prop:3.1}, we
see that, in order to complete the proof of Theorem \ref{thm:1.2}, we are left
to study case (ii) in Proposition \ref{prop:3.1}. Then, by using Proposition
\ref{prop:3.2}, we know that $x:M^n\to\mathbb{R}^{n+1}$ reduces to be
\begin{equation*}
x=\gamma_1(t)\varphi+\gamma_2(t)C.
\end{equation*}
where $\varphi:N\rightarrow\mathbb{R}^{n}$ is an affine hypersphere, $C$ is a
nonzero constant vector in $\mathbb{R}^{n+1}$, $\gamma_1(t)$ and $\gamma_2(t)$ are
described by \eqref{eqn:3.28}.

Since, according to pp.18-19 of \cite{Ch}, the umbilicity of submanifolds is invariant under
a conformal transformation of the ambient Riemannian manifold, and in a round sphere the
umbilical hypersurfaces are spheres, we obtain that the leaves $N$ of the umbilical foliation
$\mathfrak{D}$ are spheres. This implies that $\varphi:N\rightarrow\mathbb{R}^{n}$ is a
locally strongly convex affine hypersphere which is compact and without boundary. Then, by
the theorem of Blaschke and Deicke $\varphi:N\rightarrow\mathbb{R}^{n}$
is an ellipsoid, and thus $K'=0$. Hence, from the second equation in \eqref{eqn:3.8}, \eqref{eqn:3.29}
and \eqref{eqn:4.1}, we obtain that
\begin{equation}\label{eqn:4.5}
\left\{
\begin{aligned}
&K_TT=\tfrac{3n}{n+2}\|T\|^2T,\\
&K_TV=\tfrac{n}{n+2}\|T\|^2 V,\ \ V\in \mathfrak{D},\\
&K_VW=\tfrac{n}{n+2}h(V,W)T,\ \ V,W\in \mathfrak{D}.
\end{aligned}
\right.
\end{equation}
Then, by direct calculations, we can show that $\tilde{K}=0$ on $\widetilde{M}$.
By continuity, $\tilde{K}=0$ holds on the whole $M^n$. It follows that
$x: M^n\rightarrow\mathbb{R}^{n+1}$ is an ellipsoid and, as a centroaffine
hypersurface, the origin of $\mathbb{R}^{n+1}$ must be in the inside of $x(M^n)$.

This completes the proof of Theorem \ref{thm:1.2}.\qed


\vskip 3mm
\begin{flushleft}
Xiuxiu Cheng and Zejun Hu:\\
{\sc School of Mathematics and Statistics, Zhengzhou University,\\
Zhengzhou 450001, People's Republic of China.}\\
E-mails: chengxiuxiu1988@163.com; huzj@zzu.edu.cn.

\vskip 2mm
Luc Vrancken:

{\sc Universit\'e Polytechnique Hauts de France, F-59313 Valenciennes, FRANCE;\ \
KU Leuven, Department of Mathematics, Celestijnenlaan 200B--Box 2400, BE-3001 Leuven, Belgium.}\\
E-mail: luc.vrancken@univ-valenciennes.fr.

\end{flushleft}

\end{document}